\begin{document}
\newtheorem{problem}{Problem}
\newtheorem{theorem}{Theorem}
\newtheorem{lemma}[theorem]{Lemma}
\newtheorem{claim}[theorem]{Claim}
\newtheorem{cor}[theorem]{Corollary}
\newtheorem{prop}[theorem]{Proposition}
\newtheorem{definition}{Definition}
\newtheorem{question}[theorem]{Question}

\numberwithin{equation}{section}
\numberwithin{theorem}{section}

\def\cA{{\mathcal A}}
\def\cB{{\mathcal B}}
\def\cC{{\mathcal C}}
\def\cD{{\mathcal D}}
\def\cE{{\mathcal E}}
\def\cF{{\mathcal F}}
\def\cG{{\mathcal G}}
\def\cH{{\mathcal H}}
\def\cI{{\mathcal I}}
\def\cJ{{\mathcal J}}
\def\cK{{\mathcal K}}
\def\cL{{\mathcal L}}
\def\cM{{\mathcal M}}
\def\cN{{\mathcal N}}
\def\cO{{\mathcal O}}
\def\cP{{\mathcal P}}
\def\cQ{{\mathcal Q}}
\def\cR{{\mathcal R}}
\def\cS{{\mathcal S}}
\def\cT{{\mathcal T}}
\def\cU{{\mathcal U}}
\def\cV{{\mathcal V}}
\def\cW{{\mathcal W}}
\def\cX{{\mathcal X}}
\def\cY{{\mathcal Y}}
\def\cZ{{\mathcal Z}}

\def\A{{\mathbb A}}
\def\B{{\mathbb B}}
\def\C{{\mathbb C}}
\def\D{{\mathbb D}}
\def\E{{\mathbb E}}
\def\F{{\mathbb F}}
\def\G{{\mathbb G}}
\def\I{{\mathbb I}}
\def\J{{\mathbb J}}
\def\K{{\mathbb K}}
\def\L{{\mathbb L}}
\def\M{{\mathbb M}}
\def\N{{\mathbb N}}
\def\O{{\mathbb O}}
\def\P{{\mathbb P}}
\def\Q{{\mathbb Q}}
\def\R{{\mathbb R}}
\def\S{{\mathbb S}}
\def\T{{\mathbb T}}
\def\U{{\mathbb U}}
\def\V{{\mathbb V}}
\def\W{{\mathbb W}}
\def\X{{\mathbb X}}
\def\Y{{\mathbb Y}}
\def\Z{{\mathbb Z}}

\def\ep{{\mathbf{e}}_p}
\def\em{{\mathbf{e}}_m}
\def\eq{{\mathbf{e}}_q}

\def\scr{\scriptstyle}
\def\\{\cr}
\def\({\left(}
\def\){\right)}
\def\[{\left[}
\def\]{\right]}
\def\<{\langle}
\def\>{\rangle}
\def\fl#1{\left\lfloor#1\right\rfloor}
\def\rf#1{\left\lceil#1\right\rceil}
\def\le{\leqslant}
\def\ge{\geqslant}
\def\eps{\varepsilon}
\def\mand{\qquad\mbox{and}\qquad}

\def\sssum{\mathop{\sum\ \sum\ \sum}}
\def\ssum{\mathop{\sum\, \sum}}
\def\ssumw{\mathop{\sum\qquad \sum}}

\def\vec#1{\mathbf{#1}}
\def\inv#1{\overline{#1}}
\def\num#1{\mathrm{num}(#1)}
\def\dist{\mathrm{dist}}

\def\fA{{\mathfrak A}}
\def\fB{{\mathfrak B}}
\def\fC{{\mathfrak C}}
\def\fU{{\mathfrak U}}
\def\fV{{\mathfrak V}}

\newcommand{\bflambda}{{\boldsymbol{\lambda}}}
\newcommand{\bfxi}{{\boldsymbol{\xi}}}
\newcommand{\bfrho}{{\boldsymbol{\rho}}}
\newcommand{\bfnu}{{\boldsymbol{\nu}}}

\def\GL{\mathrm{GL}}
\def\SL{\mathrm{SL}}

\def\Hba{\overline{\cH}_{a,m}}
\def\Hta{\widetilde{\cH}_{a,m}}
\def\Hb1{\overline{\cH}_{m}}
\def\Ht1{\widetilde{\cH}_{m}}

\def\flp#1{{\left\langle#1\right\rangle}_p}
\def\flm#1{{\left\langle#1\right\rangle}_m}
\def\dmod#1#2{\left\|#1\right\|_{#2}}
\def\dmodq#1{\left\|#1\right\|_q}

\def\Zm{\Z/m\Z}

\def\Err{{\mathbf{E}}}

\newcommand{\comm}[1]{\marginpar{%
\vskip-\baselineskip 
\raggedright\footnotesize
\itshape\hrule\smallskip#1\par\smallskip\hrule}}

\def\xxx{\vskip5pt\hrule\vskip5pt}


\title{On the distribution of fractions with power denominator}

 \author[B. Kerr] {Bryce Kerr}
\address{School of Physical, Environmental and Mathematical Sciences, The University of New South Wales Canberra, Australia}
\email{b.kerr@adfa.edu.au}

\begin{abstract} 
In this paper we obtain a sharp upper bound for the number of solutions to a certain diophantine inequality involving fractions with power denominator. This problem is motivated by a conjecture of Zhao concerning the spacing of such fractions in short intervals and the large sieve for power modulus. As applications of our estimate we show Zhao's conjecture is true except for a set of small measure and give a new $\ell_1 \rightarrow \ell_2$ large sieve inequality for power modulus. 
\end{abstract}

\maketitle
\section{Introduction}
Given integers $k$ and $N$ consider the set of fractions of the form
\begin{align}
\label{eq:SkN}
S_k(N)=\left\{ \frac{u}{n^{k}}, \quad 1\le u\le n^{k}, \quad 1\le n \le N, \quad (n,u)=1 \right\}.
\end{align}
In this paper we consider the distribution of $S_k(N)\subset [0,1]$. This problem first appears to be considered by Zhao~\cite{Zhao1} and is motivated by large sieve type inequalities for characters to power moduli. For such inequalities, one seeks to determine the smalllest $\Delta_k(N,M)$ depending on $k,N$ and $M$ such that for any sequence of complex numbers $\alpha_m$ we have
\begin{align*}
\sum_{1\le n \le N}\sum_{\substack{a=1 \\ (a,n)=1}}^{n^{k}}\left|\sum_{K<m \le K+M}\alpha_me\left(\frac{am}{n^{k}} \right) \right|^2\le \Delta_k(N,M)\|\alpha\|_2^{2}.
\end{align*}
We note the classical large sieve inequality states that 
$$\Delta_1(N,M)\ll M+N^{2},$$
and this implies the estimates 
\begin{align}
\label{eq:classicallargesieve}
\Delta_k(N,M)\ll M+N^{2k} \quad \text{and} \quad \Delta_k(N,M)\ll NM+N^{k+1}.
\end{align}
In~\cite{Zhao1}, Zhao provided some estimates for $\Delta_k(N,M)$ which go beyond~\eqref{eq:classicallargesieve} and conjectured that
\begin{align}
\label{eq:conj1}
\Delta_k(N,M)\ll (N^{k+1}+M)N^{o(1)},
\end{align}
which is based on heuristics for the density of points of the form~\eqref{eq:SkN}. One may establish quantitative relationships between $\Delta_k(N,M)$ and estimates for the number of points of $S_k(N)$ in short intervals. For real numbers $x$ and $Y$ we define
\begin{align}
\label{eq:IxY}
I_{k,N}(x,Y)=\left|\left\{ z\in S_k(N) \ : \ \left\|z-x\right\|\le \frac{1}{Y} \right\}\right|.
\end{align}
In~\cite{Zhao1}, Zhao conjectured that for any $x$ we have 
\begin{align}
\label{eq:conj2}
I_{k,N}\left(x,N^{k+1}\right)\ll N^{o(1)},
\end{align}
and showed that~\eqref{eq:conj2} implies~\eqref{eq:conj1}. There have been a number of improvements and extensions of the results of~\cite{Zhao1} to which we refer the reader to~\cite{BZ0,BZ11,Hal} for power moduli, to~\cite{Bai,Hal1,Hal2,Hal3} for more general sparse sets of moduli and to~\cite{Bai1,BB,BS} for extensions to function fields and Gaussian integers. Such large sieve inequalities have had applications to a variety of different areas of number theory to which we refer the reader to~\cite{BZ01,BZ2,BZ3,Mat} for applications to the distribution of primes, to~\cite{BPS,ShpZhao} for elliptic curves and to~\cite{BBS,BFKS,Shp1} for arithmetic problems in finite fields and function fields.

In this paper we consider the distribution of points in $S_k(N)$. For integers $k,N,Y$ we let $I_k(N,Y)$ count the number of solutions to the inequality 
\begin{align*}
\left|\frac{u_1}{n_1^{k}}-\frac{u_2}{n_2^{k}}\right|\le \frac{1}{Y},
\end{align*}
with variables satisfying
\begin{align*}
1\le n_1,n_2\le N, \quad 1\le u_1\le n_1^{k}, \quad 1\le u_2 \le n_2^{k},
\end{align*}
and note that $I_k(N,Y)$ may be considered as an $\ell_2$-norm estimate of $I_{k,N}(x,Y)$ defined in~\eqref{eq:IxY}. We give a sharp estimate for $I_k(N,Y)$  and in particular show that 
\begin{align*}
I_k(N,Y)\le \left(\frac{N^{2k+2}}{Y}+N^{k+1}\right)N^{o(1)}.
\end{align*}
As applications of our estimate we show Zhao's conjecture~\eqref{eq:conj2} is true except for a set of small measure and provide a new large sieve type inequality for fractions with power denominator which may be thought of as an $\ell_1\rightarrow \ell_2$ version of the conjecture~\eqref{eq:conj1}. 
\subsection{Acknowledgement}
The author would like to thank Lee Zhao for a number of useful conversations.
\section{Main results}

\begin{theorem}
\label{thm:main1}
For integers $k$ and $N$  and a positive real number $Y$ we let $I_k(N,Y)$ count the number of solutions to the inequality 
\begin{align*}
\left|\frac{u_1}{n_1^k}-\frac{u_2}{n_2^k}\right|\le \frac{1}{Y},
\end{align*}
with variables satisfying 
$$1\le n_1,n_2\le N, \quad 1\le u_1\le n_1^k, \quad 1\le u_2 \le n_2^k.$$
Then we have 
\begin{align*}
I_k(N,Y)\ll \left(\frac{N^{2k+2}}{Y}+N^{k+1}\right)N^{o(1)}.
\end{align*}
\end{theorem}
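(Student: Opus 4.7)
The plan is to decompose by the greatest common divisor of $n_1$ and $n_2$ and then count solutions of a linear equation inside a box. Setting $d=\gcd(n_1,n_2)$ and writing $n_i=d a_i$ with $\gcd(a_1,a_2)=1$, the defining inequality becomes
\begin{align*}
|u_1 a_2^k - u_2 a_1^k| \le \frac{d^k a_1^k a_2^k}{Y},
\end{align*}
subject to $1\le u_i \le d^k a_i^k$. I would fix such a triple $(d,a_1,a_2)$, bound the number of admissible pairs $(u_1,u_2)$, and then sum over all triples with $d a_1,d a_2\le N$.

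The key observation is that $\gcd(a_1^k,a_2^k)=1$, so for each integer value $r$ of the difference $u_1 a_2^k - u_2 a_1^k$, the residue of $u_1$ is uniquely determined modulo $a_1^k$. Thus $u_1$ takes at most $d^k$ values in $[1,d^k a_1^k]$, and each determines $u_2$. Since $r$ lies in an interval of length $O(d^k a_1^k a_2^k/Y)$, the number of solutions for the fixed triple is bounded by
\begin{align*}
\ll d^k\left( \frac{d^k a_1^k a_2^k}{Y}+1 \right) = \frac{d^{2k} a_1^k a_2^k}{Y} + d^k.
\end{align*}

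To conclude, I would sum over $d\le N$ and $a_1,a_2\le N/d$, dropping the coprimality of $a_1,a_2$ since this only weakens the upper bound. The first term contributes
\begin{align*}
\frac{1}{Y}\sum_{d\le N} d^{2k}\left(\sum_{a\le N/d} a^k\right)^{2} \ll \frac{N^{2k+2}}{Y}\sum_{d\le N}\frac{1}{d^{2}} \ll \frac{N^{2k+2}}{Y},
\end{align*}
while the second contributes $N^{2}\sum_{d\le N} d^{k-2}$, which is $\ll N^{k+1}$ for $k\ge 2$ and $\ll N^{k+1}\log N$ for $k=1$, both absorbed by $N^{k+1}\cdot N^{o(1)}$. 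The one genuinely substantive step is the lattice-counting argument in the second paragraph, where the coprimality $\gcd(a_1^k,a_2^k)=1$ is essential; the remaining bookkeeping is elementary, and I foresee no hidden obstacles.
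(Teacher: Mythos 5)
Your argument is correct, and it takes a genuinely different---and far more elementary---route than the paper. The one substantive step checks out: writing $n_i=da_i$ with $\gcd(a_1,a_2)=1$, the difference $u_1/n_1^k-u_2/n_2^k$ is a rational with denominator $d^ka_1^ka_2^k=\mathrm{lcm}(n_1^k,n_2^k)$, and for each admissible numerator $r$ the congruence $u_1a_2^k\equiv r\pmod{a_1^k}$ pins $u_1$ to a single residue class, giving exactly $d^k$ values of $u_1$ in the full range $[1,d^ka_1^k]$, each determining $u_2$; the subsequent summations are as you say, and you even get an explicit $\log N$ in place of $N^{o(1)}$. The paper instead dyadically decomposes both $n$ and $u$ (Lemma~\ref{lem:Jcs1}), passes to a mean value of the exponential sum $\sum\sum e(yu/n^k)$ via Lemma~\ref{lem:eqntomv}, applies the van der Corput $B$-process and a Mellin-type separation of variables (Lemmas~\ref{lem:vdc2}, \ref{lem:mellin}, \ref{lem:vdctransformfinal}) to transform the $u$-sum, and only at the end performs a divisor-function count. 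The structural difference is that your lattice count crucially exploits that $u_i$ runs over the \emph{full} period $[1,n_i^k]$, so each residue class modulo $a_1^k$ contributes exactly $d^k$ values with no ``$+1$'' loss; once $u$ is restricted to a short block $[U,2U)$ with $U$ much smaller than $n^k$ (the situation the paper's dyadic reduction creates, and the one relevant to sharper pointwise statements such as the conjecture~\eqref{eq:conj2}), the same count gives $U/a_1^k+1$ per class and the error term dominates, which is where the harmonic-analysis machinery earns its keep. For the theorem as stated, however, your proof is complete and shorter.
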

As an immediate Corollary to Theorem~\ref{thm:main1}, we have.
\begin{cor}
Let $k$ and $N$ be integers, $Y$  and $x$  real numbers  and define $I_{k,N}(x,Y)$   as in~\eqref{eq:IxY}. For any $\varepsilon>0$ have 
\begin{align*}
\mu\left( x\in [0,1] \ : \ I_{k,N}(x,N^{k+1})\ge N^{\varepsilon} \right)\le N^{-\varepsilon+o(1)},
\end{align*}
where $\mu$ denotes the Lebesgue measure.
\end{cor}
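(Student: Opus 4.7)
The plan is to deduce the corollary from Theorem~\ref{thm:main1} by an $\ell_2$ moment bound on the counting function $I_{k,N}(\cdot,Y)$ combined with Chebyshev's inequality. The strategy is standard: Theorem~\ref{thm:main1} is itself an $\ell_2$-statement about $I_{k,N}$, so its natural consequence is a bound on the measure of the super-level sets.

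First, I would expand the second moment by swapping the order of summation and integration. For $Y>0$,
\begin{align*}
\int_0^1 I_{k,N}(x,Y)^2\,dx
= \sum_{z_1, z_2 \in S_k(N)} \mu\!\left\{x\in[0,1] : \|z_i-x\|\le 1/Y,\ i=1,2\right\}.
\end{align*}
The measure of each set in the sum is at most $2/Y$, and it is nonzero only when $\|z_1-z_2\|\le 2/Y$, by the triangle inequality. Therefore
\begin{align*}
\int_0^1 I_{k,N}(x,Y)^2\,dx \le \frac{2}{Y}\, I_k(N,Y/2).
\end{align*}

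Next I would specialize to $Y=N^{k+1}$ and invoke Theorem~\ref{thm:main1}:
\begin{align*}
\int_0^1 I_{k,N}(x,N^{k+1})^2\,dx
\le \frac{2}{N^{k+1}}\left(\frac{N^{2k+2}}{N^{k+1}/2}+N^{k+1}\right)N^{o(1)}
\ll N^{o(1)}.
\end{align*}
Finally, applying Chebyshev's inequality yields
\begin{align*}
\mu\!\left\{x\in[0,1] : I_{k,N}(x,N^{k+1})\ge N^{\varepsilon}\right\}
\le N^{-2\varepsilon}\int_0^1 I_{k,N}(x,N^{k+1})^2\,dx
\le N^{-2\varepsilon+o(1)},
\end{align*}
which is stronger than the stated bound $N^{-\varepsilon+o(1)}$.

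There is essentially no obstacle here: the only step requiring mild care is the passage from the double sum of measures to $I_k(N,Y/2)$, where one must verify that the constraint $\|z_1-z_2\|\le 2/Y$ coming from $\|\cdot\|$ (distance to the nearest integer) matches the constraint in the definition of $I_k$, and that the factor $Y/2$ is absorbed harmlessly into the bound of Theorem~\ref{thm:main1}. Both points are routine, so the proof is genuinely a one-line consequence of the second-moment estimate.
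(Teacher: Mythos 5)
Your argument is correct and is precisely the ``immediate'' deduction the paper intends (the paper gives no written proof): bound $\int_0^1 I_{k,N}(x,Y)^2\,dx$ by $\tfrac{2}{Y}I_k(N,Y/2)$, apply Theorem~\ref{thm:main1} with $Y=N^{k+1}$, and finish with Chebyshev; the wraparound issue with $\|\cdot\|$ versus $|\cdot|$ that you flag is indeed harmless, since pairs realizing the nearest integer $\pm 1$ force both points within $2/Y$ of the endpoints of $[0,1]$ and contribute negligibly. Your bound $N^{-2\varepsilon+o(1)}$ is in fact stronger than the stated $N^{-\varepsilon+o(1)}$.
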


Another consequence of Theorem~\ref{thm:main1} is the following $\ell_1\rightarrow \ell_2$ large sieve inequality.
\begin{cor}
\label{cor2}
For any integers $k,M,N$ and any sequence of complex numbers $\alpha_m$ we have 
\begin{align*}
\sum_{1\le n \le N}\sum_{\substack{a=1 \\ (a,n)=1}}^{n^{k}}\left|\sum_{K<m\le K+M}\alpha_me\left(\frac{am}{n^k}\right) \right|\ll (N^{k+1}+M^{1/2}N^{(k+1)/2})N^{o(1)}\|\alpha\|_2.
\end{align*}
\end{cor}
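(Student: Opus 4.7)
The plan is to deduce Corollary~\ref{cor2} from Theorem~\ref{thm:main1} by the standard duality argument for $\ell^1\to\ell^2$ bounds. Writing each absolute value as $|z|=\eta\,z$ with $|\eta|=1$ and interchanging the order of summation, the left hand side of Corollary~\ref{cor2} equals
\[
\sum_{K<m\le K+M}\alpha_m\sum_{n,a}\eta_{a,n}\,e\!\left(\frac{am}{n^k}\right),
\]
and Cauchy--Schwarz in the $m$-variable reduces the claim to the uniform bound
\[
\cS:=\sum_{K<m\le K+M}\Bigl|\sum_{n,a}\eta_{a,n}\,e\!\left(\tfrac{am}{n^k}\right)\Bigr|^{2}\ll \bigl(N^{2k+2}+MN^{k+1}\bigr)N^{o(1)}
\]
over all unimodular $(\eta_{a,n})$; the product $\|\alpha\|_2\cdot\cS^{1/2}$ will then yield precisely the desired right hand side.

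Expanding the square and swapping sums, the inner sum over $m$ becomes a standard geometric series bounded by $\min\!\bigl(M,\,\|a_1/n_1^k-a_2/n_2^k\|^{-1}\bigr)$, and using $|\eta_{a,n}|\le 1$ gives
\[
\cS\ll\sum_{(n_1,a_1),(n_2,a_2)}\min\!\left(M,\,\Bigl\|\tfrac{a_1}{n_1^k}-\tfrac{a_2}{n_2^k}\Bigr\|^{-1}\right).
\]
To feed this into Theorem~\ref{thm:main1} I would use the layer-cake identity $\min(M,1/T)=\int_0^M\mathbf{1}[T<1/t]\,dt$, which upon swapping the sum and integral yields $\cS\ll\int_0^M I_k(N,t)\,dt$, since the four-tuples appearing in $\cS$ form a subset of those counted by $I_k(N,t)$.

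Splitting the integral at $t=1$, the piece $t\in(0,1]$ is controlled by the trivial estimate $I_k(N,t)\le\bigl(\sum_{n\le N}n^k\bigr)^{2}\ll N^{2k+2}$, while on $[1,M]$ Theorem~\ref{thm:main1} gives the integrand $\ll(N^{2k+2}/t+N^{k+1})N^{o(1)}$. Integration yields $\cS\ll(N^{2k+2}\log M+MN^{k+1})N^{o(1)}$, and absorbing the $\log M$ factor into $N^{o(1)}$ produces the required $(N^{2k+2}+MN^{k+1})N^{o(1)}$. Taking square roots then completes the proof.

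The argument is essentially mechanical, and there is no serious obstacle beyond the integration bookkeeping: Theorem~\ref{thm:main1} is calibrated precisely so that its two terms integrate over $[1,M]$ to the two pieces $N^{2k+2}$ and $MN^{k+1}$ of the target bound (up to the harmless logarithm), while the bound $|S_k(N)|\ll N^{k+1}$ on the total point count ensures that the trivial regime $t\le 1$ does not dominate.
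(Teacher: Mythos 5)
Your argument is correct, and its skeleton is the same as the paper's: both proofs dualize the $\ell^1\to\ell^2$ statement to the bound $\sum_{K<m\le K+M}\bigl|\sum_{n,a}\eta_{a,n}e(am/n^k)\bigr|^2\ll (N^{2k+2}+MN^{k+1})N^{o(1)}$ for bounded coefficients (the paper quotes Montgomery's duality lemma where you unfold the absolute values and apply Cauchy--Schwarz by hand), and both then read this quadratic form as a count of close pairs of fractions to be fed into Theorem~\ref{thm:main1}. You differ in the treatment of the inner $m$-sum: the paper inserts a Schwartz majorant with $\mathrm{supp}\,\widehat\phi\subseteq[-2,2]$ and applies Poisson summation, so that the $m$-sum becomes $M$ times the indicator of $\|a_1/n_1^k-a_2/n_2^k\|\le 4/M$ and the whole form is $\ll M\,I_k(N,4M)$ at the single scale $Y=4M$; you use the unsmoothed geometric-series bound $\min(M,\|\cdot\|^{-1})$ and a layer-cake integration, arriving at $\int_0^M I_k(N,t)\,dt$. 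Both routes work, but the smoothing buys a cleaner endgame: your integration produces an extra $N^{2k+2}\log M$, and ``absorb $\log M$ into $N^{o(1)}$'' is only literally valid when $M$ is at most polynomial in $N$; for larger $M$ you should add the one-line observation that $MN^{k+1}$ then dominates $N^{2k+2}\log M$. Finally, note that in both proofs the $m$-sum is governed by the distance to the nearest integer while $I_k(N,Y)$ counts $|u_1/n_1^k-u_2/n_2^k|\le 1/Y$; the wrap-around pairs (one fraction near $0$, the other near $1$) are handled by the symmetry $u\mapsto n^k-u$, a technicality the paper also leaves implicit.
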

We note that the conjecture~\eqref{eq:conj1} implies Corollary~\ref{cor2} by the Cauchy-Schwarz inequality and that Corollary~\ref{cor2} is sharper than what one would obtain using results of~\cite{BZ0,BZ11,Hal}.

\section{Preliminary results}
The following forms the basis of the van der Corput method of exponential sums, for a proof  see~\cite[Theorem~8.16]{IwKo}.

\begin{lemma}
\label{lem:vdc}
For any real valued function $f$ defined on an interval $[a,b]$ with derivatives satisfying
\begin{align*}
\frac{T}{M^2} \ll f''(z) \ll \frac{T}{M^2}, \quad |f^{(3)}(z)|\ll \frac{T}{M^3}, \quad |f^{(4)}(z)|\le \frac{T}{M^4},\quad z\in[a,b],
\end{align*}
we have
\begin{align*}
\sum_{a<n<b}e(f(n))=\sum_{\alpha<m<\beta}f''(x_m)^{-1/2}e\left(f(x_m)-mx_m+\frac{1}{8} \right)+E,
\end{align*}
where $\alpha=f'(a), \beta=f'(b)$, $x_m$ is the unique solution to $f'(x)=m$ for $x\in [a,b]$ and
\begin{align*}
E&\ll \frac{M}{T^{1/2}}+\log(|f'(b)-f'(a)|+2).
\end{align*}

\end{lemma}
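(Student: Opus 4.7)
The plan is to execute the classical van der Corput B-process: apply Poisson summation to turn $\sum_{a<n<b} e(f(n))$ into a sum of oscillatory integrals indexed by the Fourier dual variable $m$, then evaluate each integral by the method of stationary phase. The hypotheses on the derivatives of $f$ are precisely those needed to (i) legitimise the saddle-point expansion and (ii) control the remainders uniformly in $m$.

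Concretely, I would write the sum as $\sum_{n\in\Z}\chi(n)\,e(f(n))$ with $\chi$ the indicator of $(a,b)$ (or a smoothed version supported in a slightly larger interval) and apply Poisson summation to obtain
\begin{align*}
\sum_{a<n<b}e(f(n))=\sum_{m\in\Z}\int_a^b e(f(x)-mx)\,dx + O(1),
\end{align*}
the $O(1)$ coming from the jump of $\chi$ at the endpoints (a smooth partition of unity lets one absorb this once and for all). Since $f''$ has a fixed sign of size $T/M^2$, the derivative $f'$ is strictly monotone on $[a,b]$, so the equation $f'(x)=m$ has a unique solution $x_m\in[a,b]$ exactly when $m$ lies strictly between $\alpha=f'(a)$ and $\beta=f'(b)$.

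For $m\in(\alpha,\beta)$, I would substitute the Taylor expansion $f(x)-mx=f(x_m)-mx_m+\tfrac12 f''(x_m)(x-x_m)^2+R_m(x)$ near the stationary point; the bound $|f^{(3)}|\ll T/M^3$ controls $R_m$ to sufficient order, and the Fresnel integral
\begin{align*}
\int_\R e\!\left(\tfrac12 f''(x_m)(x-x_m)^2\right)dx = f''(x_m)^{-1/2}\,e(1/8)
\end{align*}
produces the main term $f''(x_m)^{-1/2}e\bigl(f(x_m)-mx_m+1/8\bigr)$, with the phase shift $1/8$ coming from the Fresnel constant $e^{i\pi/4}$. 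The bound on $f^{(4)}$ refines the remainder and makes the contribution from each stationary $m$ an error of size $O(M/T)$; summed over the $O(T/M)$ relevant values of $m$ this yields an aggregate contribution of $O(M/T^{1/2})$ to $E$. For $m\notin[\alpha,\beta]$ there is no stationary point, and integration by parts against $d(f(x)-mx)=(f'(x)-m)\,dx$ (using the monotonicity and size of $f'-m$) gives $|I_m|\ll 1/\dist(m,[\alpha,\beta])$; summing these tails over $m\in\Z$ produces the $\log(|f'(b)-f'(a)|+2)$ term in $E$.

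The main obstacle is the bookkeeping in the saddle-point step: one needs to truncate to a shrinking neighbourhood of $x_m$ of length chosen so that the quadratic Taylor approximation is accurate while the neglected Fresnel tails remain small, then show that the contribution from outside the neighbourhood is absorbed by integration by parts. Getting the final $M/T^{1/2}$ bound (rather than a logarithm-loss version) requires this truncation to be done uniformly in $m$, using only the stated bounds on $f'',f^{(3)},f^{(4)}$. Since the result is stated verbatim as Theorem~8.16 of~\cite{IwKo}, I would ultimately cite that reference rather than reproduce the technical details, but the strategy above is the route I would follow to derive it from first principles.
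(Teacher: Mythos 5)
Your proposal is correct and matches the paper, which offers no proof of its own but simply cites~\cite[Theorem~8.16]{IwKo}; your Poisson-summation-plus-stationary-phase sketch is exactly the standard argument by which that cited theorem is established. Since you too ultimately defer to the reference for the technical truncation details, there is nothing to fault.
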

Our main application of Lemma~\ref{lem:vdc} will be when $f$ is a monomial and in this case we have the following.
\begin{lemma}
\label{lem:vdc2}
Let  $N,\eta, \alpha,$ and $y$ be real numbers satisfying
$$\alpha\not \in \N, \quad 1\le \eta\ll1, \quad \rho>0, \quad y>0.$$
Then we have 
\begin{align*}
& \sum_{N<n<\eta N}e\left(\frac{y}{\alpha}\left(\frac{n}{N} \right)^{\alpha}\right)= \\ & ((\beta-1)y)^{1/2}\sum_{c_1M<m<c_2M}\frac{1}{m}\left(\frac{m}{M}\right)^{\beta/2}e\left(\frac{1}{8}-\frac{y}{\beta}\left(\frac{m}{M} \right)^{\beta} \right)+O\left(\frac{N}{y^{1/2}}+\log{y} \right),
\end{align*}
where $M$ and $\beta$ are defined by
$$M=\frac{y}{N}, \quad \frac{1}{\alpha}+\frac{1}{\beta}=1,$$
and the constants $c_1$ and $c_2$ depend only on $\alpha$ and $\eta$.
\end{lemma}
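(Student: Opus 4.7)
The plan is to apply the van der Corput transformation of Lemma~\ref{lem:vdc} directly to $f(n) = \frac{y}{\alpha}(n/N)^{\alpha}$ on the interval $[N, \eta N]$, with the scale parameters $T = y$ and $M = N$ of that lemma (not to be confused with the parameter $M = y/N$ that appears in the statement of Lemma~\ref{lem:vdc2}). A direct differentiation gives $f^{(j)}(z) = (y/N^{j})\prod_{i=1}^{j-1}(\alpha - i)\,(z/N)^{\alpha - j}$, and since $z/N \in [1,\eta]$ on the interval in question, with $\alpha$ and $\eta$ being $O(1)$, the derivative hypotheses of Lemma~\ref{lem:vdc} (with $T = y$, $M = N$) are immediate. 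The assumption $\alpha \notin \N$ ensures the leading coefficients are nonzero and, via $\alpha \neq 1$, that the conjugate index $\beta$ is well-defined.

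Next I would compute the data needed by Lemma~\ref{lem:vdc}. The endpoint derivatives are $f'(N) = y/N$ and $f'(\eta N) = (y/N)\eta^{\alpha - 1}$, so the range of $m$ is of the form $c_{1} M < m < c_{2} M$ for constants $c_{1},c_{2}$ depending only on $\alpha$ and $\eta$. Solving the stationary point equation $f'(x) = m$ gives $x_{m} = N(m/M)^{1/(\alpha - 1)}$, and invoking the conjugate-exponent identity $1/(\alpha-1) = \beta - 1$ (a consequence of $1/\alpha + 1/\beta = 1$) this simplifies to $x_{m} = N(m/M)^{\beta - 1}$.

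The heart of the argument is a clean simplification of both the phase and the amplitude in the resulting transformed sum. Using the identity $\alpha(\beta - 1) = \beta$ (again from $\alpha + \beta = \alpha\beta$), one obtains $f(x_{m}) = (y/\alpha)(m/M)^{\beta}$ and $m x_{m} = NM(m/M)^{\beta} = y(m/M)^{\beta}$, so the phase collapses to $f(x_{m}) - m x_{m} = -(y/\beta)(m/M)^{\beta}$. For the amplitude, $f''(x_{m}) = (y(\alpha-1)/N^{2})(m/M)^{2-\beta}$, and the identity $\alpha - 1 = 1/(\beta - 1)$ together with $NM = y$ collapses this to
$$f''(x_{m})^{-1/2} = ((\beta - 1)y)^{1/2}\,\frac{1}{m}\,(m/M)^{\beta/2},$$
which is precisely the coefficient in the claimed identity. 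Finally, the error term from Lemma~\ref{lem:vdc} is $N/y^{1/2} + \log(|f'(\eta N) - f'(N)| + 2) \ll N/y^{1/2} + \log y$, matching the stated $O$-term.

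The whole argument is essentially bookkeeping once the application of Lemma~\ref{lem:vdc} is set up: the main (mild) obstacle is keeping the three conjugate-exponent identities $1/\alpha + 1/\beta = 1$, $\alpha(\beta - 1) = \beta$, and $\alpha - 1 = 1/(\beta - 1)$ straight so that the phase and amplitude collapse cleanly to the target expression $((\beta-1)y)^{1/2}\,m^{-1}(m/M)^{\beta/2}\,e\!\left(\tfrac{1}{8} - \tfrac{y}{\beta}(m/M)^{\beta}\right)$.
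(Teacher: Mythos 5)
Your proposal is correct and is exactly the argument the paper intends: the paper gives no explicit proof of Lemma~\ref{lem:vdc2}, presenting it as the direct specialization of Lemma~\ref{lem:vdc} to the monomial $f(z)=\tfrac{y}{\alpha}(z/N)^{\alpha}$ with $T=y$, $M=N$, and your computations of $x_m$, the phase, the amplitude and the error term all check out. The only caveat, which the author explicitly acknowledges after the lemma, is the sign bookkeeping when $\alpha<1$ (where $f''<0$, the range $[f'(a),f'(b)]$ reverses, and $\beta-1<0$), which is why the constants $c_1,c_2$ are left implicit; your proof silently works in the generic case but this matches the paper's own level of care.
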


We note that the constants $c_1$ and $c_2$ occuring in Lemma~\ref{lem:vdc2} can be given explicitly in terms of $\alpha$ although to state this result requires seperating cases $\alpha>0$ and $\alpha<0$ and we  have found the above form more convenient.

The following is a special case of~\cite[Lemma~2.1]{W}.
\begin{lemma}
\label{lem:eqntomv}
For two intervals $I_1,I_2$, a real valued function  
$$\phi:I_1\times I_2 \rightarrow \R,$$
and a positive real number $Y,$ we let $J(I_1,I_2,\phi,Y)$ count the number of solutions to the inequality 
$$|\phi(u_1,n_1)-\phi(u_2,n_2)|\le \frac{1}{Y},$$
with integer variables 
$$n_1,n_2\in I_1, \quad u_1,u_2\in I_2.$$
We have 
\begin{align*}
J(I_1,I_2,\phi,Y)\ll \frac{1}{Y}\int_{-Y}^{Y}\left|\sum_{n\in I_1}\sum_{u\in I_2}e(y\phi(n,u))\right|^2dy,
\end{align*}
and for any sequence of complex numbers $\theta(n,u)$ satisfying 
\begin{align}
\label{eq:thetacond}
|\theta(n,u)|\le 1,
\end{align}
 we have
\begin{align*}
\frac{1}{Y}\int_{-Y}^{Y}\left|\sum_{n\in I_1}\sum_{u\in I_2}\theta(n,u)e(y\phi(n,u))\right|^2dy\ll J\left(I_1,I_2,\phi,Y\right).
\end{align*}
\end{lemma}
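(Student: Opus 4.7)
The plan is to prove both inequalities via non-negative kernel constructions in the Fourier-dual variable, a standard technique from large-sieve arguments. For the first inequality, I will construct a non-negative kernel $H(y)$ supported in $[-Y, Y]$ whose Fourier transform $\hat H$ majorizes the indicator of $\{|\Delta|\le 1/Y\}$. A Fej\'er-type triangle $H(y) = c(1 - 2|y|/Y)_+$ supported on $[-Y/2, Y/2]$, with $c$ of order $1/Y$, serves the purpose: its Fourier transform is non-negative, and the inequality $\sin\theta \ge (2/\pi)\theta$ on $[0, \pi/2]$ confirms $\hat H(\Delta) \ge 1$ for $|\Delta|\le 1/Y$. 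Writing $S(y) = \sum_{n, u}e(y\phi(n, u))$ and $\Delta = \phi(u_1, n_1) - \phi(u_2, n_2)$, this gives
\begin{align*}
J(I_1, I_2, \phi, Y) \le \sum_{n_1, u_1, n_2, u_2}\hat H(\Delta) = \int H(y)|S(y)|^2\,dy \le \|H\|_\infty \int_{-Y}^{Y}|S|^2\,dy \ll \frac{1}{Y}\int_{-Y}^{Y}|S|^2\,dy.
\end{align*}

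For the second inequality, I will use the dual construction: a non-negative kernel $W(y)$ majorizing the indicator of $[-Y, Y]$ whose Fourier transform $\hat W$ is supported in $[-1/Y, 1/Y]$. The Beurling--Selberg extremal majorant of an interval provides exactly such a function, with $\|\hat W\|_\infty \le \int W \ll Y$. Expanding, using $|\theta(n_1, u_1)\overline{\theta(n_2, u_2)}| \le 1$, and noting that $\hat W$ is supported in $[-1/Y, 1/Y]$ so only the $J$ quadruples with $|\Delta|\le 1/Y$ contribute,
\begin{align*}
\int_{-Y}^{Y}|T(y)|^2\,dy \le \int W(y)|T(y)|^2\,dy = \sum_{n_1, u_1, n_2, u_2}\theta(n_1, u_1)\overline{\theta(n_2, u_2)}\,\hat W(\Delta) \le \|\hat W\|_\infty\cdot J \ll Y\cdot J,
\end{align*}
and dividing by $Y$ yields the stated inequality.

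The main obstacle lies in the second part: assembling a $W$ that is simultaneously non-negative, pointwise majorizes the indicator of $[-Y, Y]$, and has $\hat W$ compactly supported in $[-1/Y, 1/Y]$ with $\|\hat W\|_\infty \ll Y$. These three conditions are classical properties of the Beurling--Selberg extremal function, but a more self-contained approach would take the elementary Fej\'er-type choice $W(y) = \max(0, 2-|y|/Y)$, for which $\hat W(\Delta) = \sin^2(2\pi Y\Delta)/(\pi^2\Delta^2 Y) \ge 0$ but is no longer compactly supported; then one must handle the tail $\sum_{|\Delta|>1/Y}\hat W(\Delta)$ via a dyadic decomposition, exploiting the identity $\sum \hat W(\Delta) = \int W|S|^2\,dy$ together with iterated application of the first inequality on subintervals to reabsorb the tail into $Y\cdot J$ without appealing to any spurious doubling property of $J$.
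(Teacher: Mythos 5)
Your argument is correct. Note that the paper itself gives no proof of this lemma---it is quoted as a special case of Lemma~2.1 of Watt~\cite{W}---so there is nothing internal to compare against; your Fourier-kernel argument is in fact the standard proof of that result. The first half is complete as written: the Fej\'er triangle $H(y)=c(1-2|y|/Y)_+$ with $c\asymp 1/Y$ has non-negative transform which, by $\sin\theta\ge 2\theta/\pi$ on $[0,\pi/2]$, majorizes $1$ on $|\Delta|\le 1/Y$, and positivity of $\widehat H$ lets you extend the count to all quadruples before converting to $\int H|S|^2$. The second half is also sound once you invoke the Selberg majorant $W\ge \mathbf{1}_{[-Y,Y]}$ with $\operatorname{supp}\widehat W\subseteq[-1/Y,1/Y]$ and $\|\widehat W\|_\infty\le\int W\ll Y$; these are exactly the classical properties of that function, so citing it is legitimate and your ``main obstacle'' is not really an obstacle. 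Two small remarks. First, the sketch of the self-contained alternative (Fej\'er majorant plus a dyadic tail) is the only genuinely incomplete part of the write-up, but it is dispensable. Second, the ``doubling property'' you are at pains to avoid, namely that the number of quadruples with $|\Delta|\le L/Y$ is $\ll L\cdot J(I_1,I_2,\phi,Y)$, is in fact true and elementary: letting $A(j)$ count the $(n,u)$ with $\phi(n,u)\in[j/Y,(j+1)/Y)$, one has
\begin{align*}
\#\{|\Delta|\le L/Y\}\le \sum_{|h|\le L+1}\sum_{j}A(j)A(j+h)\le (2L+3)\sum_j A(j)^2\ll L\cdot J,
\end{align*}
by Cauchy--Schwarz in $j$; this is the same interval-counting device the paper uses in Lemma~\ref{lem:Jcs1}, so the tail of the Fej\'er route could also be closed this way if you wanted to avoid Beurling--Selberg entirely.
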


The next two results are consequences of Lemma~\ref{lem:eqntomv}.
\begin{lemma}
\label{lem:mvshorter}
For intervals $I_1$ and $I_2,$ a real valued function 
$$\phi:I_1\times I_2 \rightarrow \R,$$
and positive real numbers $Y$ and $Z$ with $Z\le Y$ we have 
\begin{align*}
\frac{1}{Y}\int_{-Y}^{Y}\left|\sum_{u\in I_1}\sum_{n\in I_2}e(y\phi(n,u))\right|^2dy\ll \frac{1}{Z}\int_{-Z}^{Z}\left|\sum_{u\in I_1}\sum_{n\in I_2}e(y\phi(n,u))\right|^2dy.
\end{align*}
\end{lemma}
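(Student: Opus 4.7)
The plan is to apply Lemma~\ref{lem:eqntomv} twice: first (in its second form, with $\theta \equiv 1$) to bound the left-hand side by a counting quantity $J$, and then (in its first form) to bound that counting quantity from above by the right-hand side.

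More precisely, set
$$J(I_1,I_2,\phi,W) = \#\bigl\{(n_1,n_2,u_1,u_2)\in I_1^2\times I_2^2 : |\phi(u_1,n_1)-\phi(u_2,n_2)|\le 1/W\bigr\}$$
as in Lemma~\ref{lem:eqntomv} (with the roles of $I_1,I_2$ matched appropriately; the relabeling in the lemma statement is harmless). Taking $\theta\equiv 1$ in the second assertion of Lemma~\ref{lem:eqntomv} yields
\begin{align*}
\frac{1}{Y}\int_{-Y}^{Y}\left|\sum_{u\in I_1}\sum_{n\in I_2}e(y\phi(n,u))\right|^2 dy \ll J(I_1,I_2,\phi,Y),
\end{align*}
while the first assertion of Lemma~\ref{lem:eqntomv} applied with $Z$ in place of $Y$ gives
\begin{align*}
J(I_1,I_2,\phi,Z) \ll \frac{1}{Z}\int_{-Z}^{Z}\left|\sum_{u\in I_1}\sum_{n\in I_2}e(y\phi(n,u))\right|^2 dy.
\end{align*}

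It then remains only to compare $J(I_1,I_2,\phi,Y)$ with $J(I_1,I_2,\phi,Z)$. This is the trivial observation that since $Z\le Y$ we have $1/Y\le 1/Z$, so the defining inequality $|\phi(u_1,n_1)-\phi(u_2,n_2)|\le 1/Y$ is strictly more restrictive than $|\phi(u_1,n_1)-\phi(u_2,n_2)|\le 1/Z$, and hence every tuple counted by $J(I_1,I_2,\phi,Y)$ is also counted by $J(I_1,I_2,\phi,Z)$, giving $J(I_1,I_2,\phi,Y)\le J(I_1,I_2,\phi,Z)$. Chaining the three inequalities yields the claim.

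There is no real obstacle here; the entire content of the lemma is that Lemma~\ref{lem:eqntomv} converts the $L^2$-integral of an exponential sum into a solution count in a way that is essentially insensitive (up to the constant in $\ll$) to the parameter controlling the length of integration, as long as one uses the natural pairing (integral over $[-W,W]$ normalized by $1/W$ corresponds to inequality with threshold $1/W$). The only point worth noting is that the upper and lower bounds in Lemma~\ref{lem:eqntomv} go in opposite directions, which is exactly what makes the sandwich work and forces the direction $Z\le Y$ in the conclusion.
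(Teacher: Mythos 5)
Your proof is correct and is exactly the derivation the paper intends: the paper states Lemma~\ref{lem:mvshorter} without proof, noting only that it is a consequence of Lemma~\ref{lem:eqntomv}, and your sandwich argument (second form with $\theta\equiv 1$ to get $J(\cdot,Y)$, monotonicity $J(\cdot,Y)\le J(\cdot,Z)$ from $1/Y\le 1/Z$, then the first form at parameter $Z$) is precisely that derivation.
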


\begin{lemma}
\label{lem:mvcoefficients}
For intervals $I_1$ and $I_2,$ a real valued function 
$$\phi:I_1\times I_2 \rightarrow \R,$$
a positive real number $Y$ and a sequence of complex numbers $\theta(n,u)$ satisfying 
$$|\theta(n,u)|\le 1,$$
we have  
\begin{align*}
\int_{-Y}^{Y}\left|\sum_{u\in I_1}\sum_{n\in I_2}\theta(n,u)e(y\phi(n,u))\right|^2dy\ll \int_{-Y}^{Y}\left|\sum_{u\in I_1}\sum_{n\in I_2}e(y\phi(n,u))\right|^2dy.
\end{align*}
\end{lemma}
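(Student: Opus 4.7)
The plan is to prove Lemma~\ref{lem:mvcoefficients} by chaining the two bounds already packaged inside Lemma~\ref{lem:eqntomv}. The key observation is that the solution count $J(I_1,I_2,\phi,Y)$ serves as a pivot: the weighted mean square (with coefficients $\theta(n,u)$) is controlled above by $Y\cdot J(I_1,I_2,\phi,Y)$, while $J(I_1,I_2,\phi,Y)$ is in turn controlled above by $Y^{-1}$ times the unweighted mean square. Composing these two inequalities eliminates $J$ and gives exactly the stated comparison, with the coefficients removed.

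Concretely, I would proceed in two steps. First, apply the second part of Lemma~\ref{lem:eqntomv} (with the given $\theta(n,u)$) to get
\begin{equation*}
\int_{-Y}^{Y}\left|\sum_{u\in I_1}\sum_{n\in I_2}\theta(n,u)e(y\phi(n,u))\right|^{2}dy \ll Y\cdot J(I_1,I_2,\phi,Y).
\end{equation*}
Second, apply the first part of Lemma~\ref{lem:eqntomv} to replace $J(I_1,I_2,\phi,Y)$ by
\begin{equation*}
J(I_1,I_2,\phi,Y)\ll \frac{1}{Y}\int_{-Y}^{Y}\left|\sum_{u\in I_1}\sum_{n\in I_2}e(y\phi(n,u))\right|^{2}dy.
\end{equation*}
Multiplying these together cancels the factor of $Y$ and yields the desired estimate.

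There is really no obstacle here: the hypothesis $|\theta(n,u)|\le 1$ is exactly the condition~\eqref{eq:thetacond} required by the second part of Lemma~\ref{lem:eqntomv}, and the two directions of that lemma are symmetric in the roles of $I_1$ and $I_2$, so the labelling of variables in the statement of Lemma~\ref{lem:mvcoefficients} (with $u\in I_1$, $n\in I_2$, whereas Lemma~\ref{lem:eqntomv} is stated with $n\in I_1$, $u\in I_2$) is harmless. Thus the lemma is an immediate two-line corollary of Lemma~\ref{lem:eqntomv}, and no further input is needed.
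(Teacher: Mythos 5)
Your proposal is correct and is exactly the intended derivation: the paper gives no explicit proof, merely noting that Lemma~\ref{lem:mvcoefficients} is a consequence of Lemma~\ref{lem:eqntomv}, and your chaining of the weighted upper bound through the pivot $J(I_1,I_2,\phi,Y)$ back to the unweighted mean square is the argument being alluded to. The observation that the variable labelling is immaterial is also fine.
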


The following is~\cite[Lemma~6]{FoI}.
\begin{lemma}
\label{lem:mellin}
Let $V,L,\nu$ and $\lambda$ be real numbers satisfying
\begin{align*}
0<L\le V\le \nu V<\lambda L,
\end{align*}
and let $a_{v}$ be a sequence of complex numbers satisfying $|a_v|\le 1$. Then we have 
\begin{align*}
\sum_{V<v<\nu V}a_v=\frac{1}{2\pi}\int_{-L}^{L}\left(\sum_{L<\ell <\lambda L}a_{\ell}\ell^{-it} \right)V^{it}(\nu^{it}-1)t^{-1}dt+O(\log{(2+L)}).
\end{align*}
\end{lemma}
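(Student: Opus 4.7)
The plan is to derive the identity from the Mellin (equivalently, Dirichlet) representation of the indicator function of the interval $(V,\nu V)$, and then to truncate the resulting integral at height $L$, bounding the tail by extracting cancellation from an oscillatory integrand.

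First I would recall the principal value identity
$$\mathbf{1}_{V<v<\nu V}(v)\ =\ \frac{1}{2\pi}\int_{-\infty}^{\infty}V^{it}(\nu^{it}-1)\,v^{-it}\,\frac{dt}{it},$$
valid for real $v>0$ with $v\ne V,\nu V$. This is a rewriting of the classical Dirichlet integral $\int_{0}^{\infty}\frac{\sin(\alpha t)}{t}\,dt=\frac{\pi}{2}\operatorname{sgn}(\alpha)$ applied to $\alpha=\log(\nu V/v)$ and $\alpha=\log(V/v)$, the cosine parts cancelling by oddness. The hypothesis $0<L\le V\le\nu V<\lambda L$ ensures that every integer $v$ with $V<v<\nu V$ already satisfies $L<v<\lambda L$, so multiplying by $a_v$ and summing over $v$ yields
$$\sum_{V<v<\nu V}a_v\ =\ \frac{1}{2\pi}\int_{-\infty}^{\infty}V^{it}(\nu^{it}-1)\left(\sum_{L<\ell<\lambda L}a_\ell\,\ell^{-it}\right)\frac{dt}{it}.$$

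The next step is to replace the range of integration by $|t|\le L$ and show that the tail contributes $O(\log(2+L))$. Writing the tail as
$$\frac{1}{2\pi}\sum_{L<\ell<\lambda L}a_\ell\int_{|t|>L}\frac{(\nu V/\ell)^{it}-(V/\ell)^{it}}{it}\,dt,$$
I would apply, for each real $\alpha$, the bound
$$\left|\int_{|t|>L}\frac{e^{i\alpha t}}{t}\,dt\right|\ \ll\ \min\!\left(1,\frac{1}{|\alpha|L}\right),$$
which follows from one integration by parts (or from the asymptotic $\operatorname{Si}(x)=\frac{\pi}{2}\operatorname{sgn}(x)+O(1/|x|)$). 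Applying this with $\alpha=\log(\nu V/\ell)$ and $\alpha=\log(V/\ell)$, and using $V,\nu V\asymp L$ so that $|\log(V/\ell)|\asymp |\ell-V|/L$ when $\ell$ is near $V$ (and similarly near $\nu V$), the range $\ell$ within $O(1)$ of $V$ or $\nu V$ contributes $O(1)$, while the complementary range yields a harmonic-type sum $\sum_{1\le|j|\le L}1/|j|\ll\log(2+L)$. Combined with the trivial bound $|a_\ell|\le 1$, this gives the desired error.

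The main obstacle is precisely this tail estimate: bounding the inner Dirichlet polynomial trivially by $L$ would produce a divergent integral, so one must exploit oscillation in $t$ for $|t|>L$. Everything else is bookkeeping: the Mellin representation is standard, and once the $\min(1,1/(|\alpha|L))$ bound is in place the $\ell$-sum splits cleanly into a near-resonance and far-from-resonance part, producing respectively $O(1)$ and the logarithm.
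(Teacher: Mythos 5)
The paper gives no proof of this lemma — it is quoted from Fouvry and Iwaniec~\cite[Lemma~6]{FoI} — so there is no internal argument to compare against; judged on its own, your proof is correct and is essentially the standard proof of such separation-of-variables lemmas. The principal-value Dirichlet integral does represent the indicator of $(V,\nu V)$, the hypothesis $0<L\le V\le\nu V<\lambda L$ lets you rewrite the left-hand side as the $\ell$-sum over $(L,\lambda L)$ weighted by that indicator, and the tail $|t|>L$ is handled correctly by $\bigl|\int_{|t|>L}e^{i\alpha t}\,t^{-1}\,dt\bigr|\ll\min\bigl(1,1/(|\alpha|L)\bigr)$ combined with the lower bound $|\log(V/\ell)|\ge |V-\ell|/(\lambda L)$ (and likewise at $\nu V$), which turns the $\ell$-sum into a harmonic sum of size $\ll\log(2+L)$. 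Three small points are worth recording. First, your derivation yields the kernel $V^{it}(\nu^{it}-1)/(it)$, which is the correct normalization; the $t^{-1}$ in the statement as printed is off by a factor of $i$, a harmless slip here since the lemma is only invoked after taking absolute values (only $G(t)=|c^{it}-1|/|t|$ enters the proof of Lemma~\ref{lem:vdctransformfinal}). Second, integers $\ell$ equal to $V$ or $\nu V$, where the pointwise identity gives $1/2$ rather than $0$, contribute $O(1)$ and should be mentioned explicitly. Third, your implied constant depends on $\nu$ and $\lambda$ (through $V\asymp L$ and the length of the $\ell$-range), which is consistent with the intended application, where these are absolute constants.
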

\begin{lemma}
\label{lem:vdctransformfinal}
Let $Y,N,U,\alpha,\rho,\gamma$ and $\eta$ be positive real numbers satisfying
\begin{align*}
Y\ge 2, \quad \alpha \not \in \N, \quad \eta,\rho>1.
\end{align*}
There exists real numbers $Y_1,Y_2,c_1,c_2$  satisfying
$$Y\ll Y_1 \ll Y_2\ll Y, \quad  1 \ll c_3 \ll c_4 \ll 1,$$
and a sequence of complex numbers $\theta(n,u)$ satisfying 
$$|\theta(n,u)|\le 1,$$
 such that
\begin{align*}
& \int_{Y}^{2Y}\left|\sum_{N\le n< \eta N}\sum_{U\le u< \rho U}e\left(y\left(\frac{u}{U}\right)^{\alpha}\left(\frac{n}{N}\right)^{\gamma}\right) \right|^2dy\ll \\ &  \frac{U^2(\log{Y})^2}{Y}\int_{Y_1}^{Y_2}\left|\sum_{N\le n <\eta N}\sum_{c_3V\le v\le c_4V}\theta(n,v)e\left(y\left(\frac{n}{N}\right)^{\gamma(1-\beta)}\left(\frac{v}{V}\right)^{\beta} \right)\right|^2dy \\ &  +N^2(\log{Y})^2(U^2+Y),
\end{align*}
with implied constants depending only on $\alpha,\rho,\gamma$ and $\eta$ and $\beta$ and $V$ are is given by 
$$\frac{1}{\alpha}+\frac{1}{\beta}=1, \quad V=\frac{Y}{U}.$$
\end{lemma}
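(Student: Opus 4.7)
My plan is to apply Lemma~\ref{lem:vdc2} to the inner sum in $u$, then use Lemma~\ref{lem:mellin} to decouple the $(y,n)$-dependence of the resulting dual summation range, and finally change variables in $y$ to put the phase into the claimed form. Fix $n\in[N,\eta N]$ and $y\in[Y,2Y]$ and write $y(u/U)^\alpha(n/N)^\gamma=(y'/\alpha)(u/U)^\alpha$ with $y':=\alpha y(n/N)^\gamma\asymp Y$; Lemma~\ref{lem:vdc2} (applied with summation length $U$) then yields
\begin{align*}
\sum_{U\le u<\rho U}e(y(u/U)^\alpha(n/N)^\gamma)=A(n,y)+O(U/Y^{1/2}+\log Y),
\end{align*}
where $A(n,y):=((\beta-1)y')^{1/2}\sum_{c_1M_n<v<c_2M_n}v^{-1}(v/M_n)^{\beta/2}e(\tfrac18-(y'/\beta)(v/M_n)^\beta)$ and $M_n:=y'/U\asymp V$. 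Summing in $n$, squaring, and using $|a+b|^2\ll|a|^2+|b|^2$, the error contributes at most $Y\cdot N^2(U/Y^{1/2}+\log Y)^2\ll N^2(\log Y)^2(U^2+Y)$ to the integral, matching the second term of the claimed bound.

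For the main part, using $UV=Y$ and $1/\alpha+1/\beta=1$, one computes that each summand of $A(n,y)$ has coefficient $C_0\,y^{(1-\beta)/2}U^{\beta/2}(n/N)^{\gamma(1-\beta)/2}v^{\beta/2-1}$ of absolute size $\asymp U/Y^{1/2}$, while its phase equals $\tilde y(v/V)^\beta(n/N)^{\gamma(1-\beta)}+\tfrac18$ with $\tilde y:=-\beta^{-1}\alpha^{1-\beta}Y^\beta y^{1-\beta}$. On $y\in[Y,2Y]$ one checks $|\tilde y|\asymp Y$ and $|d\tilde y/dy|\asymp 1$, so the substitution $y\mapsto\tilde y$ produces exactly the target monomial phase; after the conjugation symmetry $\tilde y\mapsto-\tilde y$ (which leaves $|\cdots|^2$ invariant), the new integration range is of the form $[Y_1,Y_2]$ with $Y\ll Y_1\ll Y_2\ll Y$. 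The one obstruction to concluding via Lemma~\ref{lem:mvcoefficients} directly is that the $v$-range $(c_1M_n,c_2M_n)$ depends on both $y$ and $n$.

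I remove this dependence by invoking Lemma~\ref{lem:mellin} with its parameter $V$ set to $c_1M_n$, $\nu=c_2/c_1$, and $L\asymp V$ a fixed constant small enough that $(c_1M_n,c_2M_n)\subset(L,\lambda L)$ uniformly in $(y,n)$. This rewrites the sum as $\frac{1}{2\pi}\int_{-L}^L(\sum_{L<\ell<\lambda L}a_\ell\ell^{-it})(c_1M_n)^{it}((c_2/c_1)^{it}-1)t^{-1}dt$ plus an $O(\log L)$ error whose square integrates to $\ll N^2U^2(\log Y)^2$, still within the allowed error. The Mellin kernel is bounded by $\ll\min(1,1/|t|)$ and its $y$-dependence factors off as $y^{it}$ (modulus $1$), while its $n$-dependence factors as $(n/N)^{\gamma it}$. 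Squaring, applying Cauchy--Schwarz on the $t$-integral against $\min(1,1/|t|)dt$ (whose mass over $[-L,L]$ is $\ll\log Y$, costing one $\log Y$ factor), swapping the $t$- and $y$-integrations, and performing the change of variable $y\mapsto\tilde y$, for each $t$ the inner integrand becomes $|\sum_{n,v}\theta(n,v;t)e(\tilde y(v/V)^\beta(n/N)^{\gamma(1-\beta)})|^2$ with $|\theta(n,v;t)|$ bounded by an absolute constant (absorbing $(v/V)^{\beta/2-1-it}$, $(n/N)^{\gamma(1-\beta)/2+\gamma it}$, and constant phases). Taking $\theta(n,v):=\theta(n,v;t^*)$ for the $t^*$ maximising the inner $\tilde y$-integral, and noting that squaring the scalar prefactor $\asymp U/Y^{1/2}$ and incorporating the factor $y^{1-\beta}$ from the $y$-integral (which produces $Y^{1-\beta}$ in combination with the $V^{\beta-2}$ from the Mellin substitution, giving the identity $U^\beta Y^{1-\beta}V^{\beta-2}=U^2/Y$), yields the claimed bound. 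The main technical hurdle is this last bookkeeping step, which requires carefully tracking every $y$-, $n$-, and $v$-exponent; once it is verified, the estimate follows.
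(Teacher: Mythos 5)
Your proposal is correct and follows essentially the same route as the paper's proof: apply Lemma~\ref{lem:vdc2} to the $u$-sum for fixed $(n,y)$, extract the amplitude $\asymp U/Y^{1/2}$ into coefficients $\theta(n,v)$, use Lemma~\ref{lem:mellin} to decouple the $(y,n)$-dependent $v$-range at the cost of $(\log Y)^2$, and change variables $y\mapsto\tilde y\asymp Y^{\beta}y^{1-\beta}$ to normalise the phase. The only (immaterial) difference is that you handle the Mellin $t$-integral by Cauchy--Schwarz after squaring, whereas the paper extracts a maximising $t_0$ before squaring.
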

\begin{proof}
By Lemma~\ref{lem:vdc2}, for fixed $Y\le y \le 2Y$ and $N\le n <\eta N,$ we have 
\begin{align*}
& \sum_{U\le u< \rho U}e\left(y\left(\frac{u}{U}\right)^{\alpha}\left(\frac{n}{N}\right)^{\gamma}\right)=\left(\frac{(\beta-1)\alpha yn^{\gamma}}{N^{\gamma}}\right)^{1/2} \\ & \times\sum_{c_1yn^{\gamma}U^{-1}N^{-\gamma}<v<c_2yn^{\gamma}U^{-1}N^{-\gamma}}\frac{1}{v}\left(\frac{v}{yn^{\gamma}U^{-1}N^{-\gamma}}\right)^{\beta/2}e\left(\frac{1}{8}-\frac{\alpha yn^{\gamma}}{N^{\gamma}\beta}\left(\frac{v}{yn^{\gamma}U^{-1}N^{-\gamma}} \right)^{\beta} \right) \\ &  \quad \quad \quad +O\left(\log{Y}\left(\frac{U}{Y^{1/2}}+1\right)\right),
\end{align*}
where $\beta$ is given by 
\begin{align*}
\frac{1}{\alpha}+\frac{1}{\beta}=1,
\end{align*}
and the constants $c_1$ and $c_2$ depend only on $\alpha$. Summing the above over $N\le n<\eta N$ and taking absolute values, we see that
\begin{align*}
&\left|\sum_{N\le n<\eta N}\sum_{U\le u< \rho U}e\left(y\left(\frac{u}{U}\right)^{\alpha}\left(\frac{n}{N}\right)^{\gamma}\right)\right|\ll \frac{U}{Y^{1/2}} \\ & \left|\sum_{N\le n <\eta N}\sum_{c_1yn^{\gamma}V/YN^{\gamma}<v<c_2yn^{\gamma}V/YN^{\gamma}}\theta(n,v)e\left(\frac{\alpha y}{\beta}\left(\frac{Y}{y}\right)^{\beta}\left(\frac{n}{N}\right)^{\gamma(1-\beta)}\left(\frac{v}{V}\right)^{\beta} \right) \right| \\ & \quad \quad \quad \quad \quad +N\log{Y}\left(1+\frac{U}{Y^{1/2}}\right),
\end{align*}
where 

\begin{align}
\label{eq:Vdef}
V=\frac{Y}{U},
\end{align}
and
\begin{align*}
\theta(n,v)=\left(\frac{n}{N}\right)^{\gamma/2}\frac{Y}{vU}\left(\frac{v}{Yn^{\gamma}U^{-1}N^{-\gamma}}\right)^{\beta/2},
\end{align*}
so that if $V\ll v \ll V$ and $N\le n <\eta N$ we have 
\begin{align*}
|\theta(n,v)|\ll 1.
\end{align*}
Defining
\begin{align*}
&W(y)= \\ & \left|\sum_{N\le n <\eta N}\sum_{c_1yn^{\gamma}V/YN^{\gamma}<v<c_2yn^{\gamma}V/YN^{\gamma}}\theta(n,v)e\left(\frac{\alpha y}{\beta}\left(\frac{Y}{y}\right)^{\beta}\left(\frac{n}{N}\right)^{\gamma(1-\beta)}\left(\frac{v}{V}\right)^{\beta} \right) \right|,
\end{align*}
the above simplifies to
\begin{align}
\label{eq:Psstep1-5}
&\left|\sum_{N\le n<\eta N}\sum_{U\le u< \rho U}e\left(y\left(\frac{u}{U}\right)^{\alpha}\left(\frac{n}{N}\right)^{\gamma}\right)\right|\ll \frac{U}{Y^{1/2}}W(y)+N\log{Y}\left(1+\frac{U}{Y^{1/2}}\right).
\end{align}

We next remove the dependence on $n$ and $y$ in summation over $v$ in $W(y)$. There exists absolute constants $c_3$ and $c_4$ depending only on $c_1,c_2$ and $\eta$ such that whenever $Y\le y \le 2Y$ and $N\le n \le \eta N$ we have 
\begin{align*}
\left[\frac{c_1 y n^{\gamma}V}{YN^{\gamma}},\frac{c_2 y n^{\gamma}V}{YN^{\gamma}}\right]\subseteq [c_3V,c_4V],
\end{align*}
and hence by Lemma~\ref{lem:mellin}, for some $c$ depending only on $c_1$ and $c_2$
\begin{align*}
W(y)&=\frac{1}{2\pi}\int_{-V}^{V}\left(\sum_{N\le n <\eta N}\sum_{c_3V\le v\le c_4V}\theta(n,v)e\left(\frac{\alpha y}{\beta}\left(\frac{Y}{y}\right)^{\beta}\left(\frac{n}{N}\right)^{\gamma(1-\beta)}\left(\frac{v}{V}\right)^{\beta} \right)v^{-it}\right) \\ & \times \left(\frac{yn^{\gamma}V}{YN^{\gamma}}\right)^{it}(c^{it}-1)t^{-1}dt+O\left(N(\log{(V+2)})\right).
\end{align*}
Taking absolute values and applying the triangle inequality gives
\begin{align*}
|W(y)| & \ll \int_{-V}^{V}\left|\sum_{N\le n <\eta N}\sum_{c_3V\le v\le c_4V}\theta(n,v,t)e\left(\frac{\alpha y}{\beta}\left(\frac{Y}{y}\right)^{\beta}\left(\frac{n}{N}\right)^{\gamma(1-\beta)}\left(\frac{v}{V}\right)^{\beta} \right)\right|G(t)dt
\\ & \quad \quad \quad \quad +O\left(N(\log{Y})\right),
\end{align*}
where $G(t)$ is the unique continuous function on $\R$ defined for nonzero $t$ by
\begin{align*}
G(t)=\frac{|c^{it}-1|}{|t|},
\end{align*}
so that 
\begin{align*}
\int_{-V}^{V}G(t)dt\ll \log{(V+2)} \ll \log{Y},
\end{align*}
and $\theta(n,v,t)$ is given by 
\begin{align*}
\theta(n,v,t)=\theta(n,v)v^{-it}n^{it\gamma}.
\end{align*}
This implies that for some $|t_0|\le V$ we have 
\begin{align*}
& |W(y)|\ll \\ & \log{Y}\left|\sum_{N\le n <\eta N}\sum_{c_3V\le v\le c_4V}\theta(n,v,t_0)e\left(\frac{\alpha y}{\beta}\left(\frac{Y}{y}\right)^{\beta}\left(\frac{n}{N}\right)^{\gamma(1-\beta)}\left(\frac{v}{V}\right)^{\beta} \right)\right| \\ & \quad \quad \quad \quad \quad +N(\log{Y}),
\end{align*}
and hence by~\eqref{eq:Psstep1-5}
\begin{align}
\label{eq:transformfinal}
\int_{Y}^{2Y}\left|\sum_{N\le n<\eta N}\sum_{U\le u< \rho U}e\left(y\left(\frac{u}{U}\right)^{\alpha}\left(\frac{n}{N}\right)^{\gamma}\right)\right|^2dy  &\ll  \frac{U^2(\log{Y})^2}{Y}T(Y) \\ & \quad \quad +N^2(U^2+Y)(\log{Y})^2, \nonumber
\end{align}
where 
$$T(Y)=\int_{Y}^{2Y}\left|\sum_{N\le n <\eta N}\sum_{c_3V\le v\le c_4V}\theta(n,v,t_0)e\left(\frac{\alpha y}{\beta}\left(\frac{Y}{y}\right)^{\beta}\left(\frac{n}{N}\right)^{\gamma(1-\beta)}\left(\frac{v}{V}\right)^{\beta} \right)\right|^2dy.$$

The change of variable
\begin{align*}
z=\frac{\alpha Y^{\beta}y^{1-\beta}}{\beta},
\end{align*}
in the above integral implies that there exists $Y_1$ and $Y_2$ satisfying
\begin{align*}
Y\ll Y_1 \ll Y_2 \ll Y,
\end{align*}
such that 
\begin{align*}
T(Y)\ll \int_{Y_1}^{Y_2}\left|\sum_{N\le n <\eta N}\sum_{c_3V\le v\le c_4V}\theta(n,v,t_0)e\left(z\left(\frac{n}{N}\right)^{\gamma(1-\beta)}\left(\frac{v}{V}\right)^{\beta} \right)\right|^2dz,
\end{align*}
and the result follows from~\eqref{eq:transformfinal}.
\end{proof}

\begin{lemma}
\label{lem:Jcs1}
For integers $U_1,N_1,U_2,N_2,Y$ and $k$ we let $J_k(U_1,N_1,U_2,N_2,Y)$ count the number of solutions to the inequality 
\begin{align*}
\left|\frac{u_1}{n_1^k}-\frac{u_2}{n_2^k} \right|\le \frac{1}{Y},
\end{align*} 
with integer variables satisfying
\begin{align*}
U_1 \le u_1< 2U_1, \ \ N_1 \le n_1< 2N_1, \ \ U_2\le u_2< 2U_2, \ \ N_2\le n_2< 2N_2,
\end{align*}
and when $U_1=U_2$ and $N_1=N_2$ we write
\begin{align*}
J_k(U_1,N_1,U_2,N_2,Y)=J_k(U_1,N_1,Y).
\end{align*}
We have 
\begin{align*}
J_k(U_1,N_1,U_2,N_2,Y)\ll J_k(U_1,N_1,Y)^{1/2}J_k(U_2,N_2,Y)^{1/2}.
\end{align*}
\end{lemma}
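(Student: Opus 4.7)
The plan is to prove the inequality by a standard Cauchy--Schwarz argument applied to a bilinear integral representation of $J_k(U_1,N_1,U_2,N_2,Y)$, with the diagonal counts $J_k(U_i,N_i,Y)$ recovered through Lemma~\ref{lem:eqntomv}. Writing
$$S_i(y) = \sum_{\substack{N_i \le n_i < 2N_i \\ U_i \le u_i < 2U_i}} e\!\left(y\,\frac{u_i}{n_i^{k}}\right), \qquad i=1,2,$$
the quantity to be bounded should be viewed as an inner product of $S_1$ and $S_2$ (weighted by a Fej\'er-type kernel in $y$), while $J_k(U_i,N_i,Y)$ essentially controls the squared $L^2$-norm of $S_i$ against that kernel.

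To make this precise, I would first choose a non-negative Fej\'er-type majorant $h$ of the indicator of $[-1/Y,1/Y]$ whose Fourier transform $\widehat h$ is supported in $[-Y,Y]$ and satisfies $|\widehat h(y)|\ll 1/Y$ there (a suitable constant multiple of $(\sin(\pi Y t/2)/(\pi Y t/2))^{2}$ works). Inserting the Fourier inversion formula for $h$ and interchanging the sums and the integral yields
$$J_k(U_1,N_1,U_2,N_2,Y) \le \sum h\!\left(\frac{u_1}{n_1^{k}} - \frac{u_2}{n_2^{k}}\right) = \int_{-Y}^{Y} \widehat h(y)\, S_1(y)\,\overline{S_2(y)}\, dy \ll \frac{1}{Y}\int_{-Y}^{Y}|S_1(y)||S_2(y)|\,dy,$$
where the sum on the left is over $N_i \le n_i < 2N_i$ and $U_i \le u_i < 2U_i$ for $i=1,2$.

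The Cauchy--Schwarz inequality in $y$ then gives
$$\frac{1}{Y}\int_{-Y}^{Y}|S_1(y)||S_2(y)|\,dy \le \left(\frac{1}{Y}\int_{-Y}^{Y}|S_1(y)|^{2}\,dy\right)^{1/2}\left(\frac{1}{Y}\int_{-Y}^{Y}|S_2(y)|^{2}\,dy\right)^{1/2},$$
and applying the upper-bound direction of Lemma~\ref{lem:eqntomv} (with the trivial choice $\theta\equiv 1$) to each factor bounds the right-hand side by $J_k(U_1,N_1,Y)^{1/2}\,J_k(U_2,N_2,Y)^{1/2}$, which is the claimed inequality. The argument is purely formal once Lemma~\ref{lem:eqntomv} is in hand, so no real obstacle is anticipated; the only point requiring mild care is the standard construction of the majorant $h$, which ensures the passage to the bilinear integral with the correct support and size for $\widehat h$.
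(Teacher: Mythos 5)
Your argument is correct, but it proves the lemma by a different mechanism than the paper does. The paper's proof is purely combinatorial: it partitions the real line into bins $[j/Y,(j+1)/Y)$, observes that a pair counted by $J_k(U_1,N_1,U_2,N_2,Y)$ must fall into bins with $|j_1-j_2|\le 1$, applies the discrete Cauchy--Schwarz inequality to $\sum_{|j_1-j_2|\le 1}J_1(j_1)J_2(j_2)$, and notes that $\sum_j J_i(j)^2$ counts pairs lying in a common bin, hence is at most $J_k(U_i,N_i,Y)$. You instead pass to the Fourier side: a Fej\'er-type majorant turns the bilinear count into $\frac{1}{Y}\int_{-Y}^{Y}|S_1(y)||S_2(y)|\,dy$, Cauchy--Schwarz is applied in the integral variable $y$, and the diagonal factors are recovered from the second (reverse) inequality of Lemma~\ref{lem:eqntomv} with $\theta\equiv 1$. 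Both routes are sound and of comparable length. The paper's version is self-contained and elementary, needing neither the majorant construction nor Watt's lemma; yours effectively re-derives the bilinear analogue of the first inequality of Lemma~\ref{lem:eqntomv} (the majorant step, which you would need to carry out explicitly rather than merely cite, since the stated lemma only covers the diagonal case $S_1=S_2$) and then outsources the diagonal bound to the quoted lemma. The one point to make airtight in a written version is the existence of the majorant $h\ge \mathbf{1}_{[-1/Y,1/Y]}$, $h\ge 0$, with $\operatorname{supp}\widehat h\subseteq[-Y,Y]$ and $\widehat h\ll 1/Y$; the normalized squared sinc function you indicate does the job, so there is no gap, only an extra construction the paper's binning argument avoids.
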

\begin{proof}
For integer $j$ let $J_1(j)$ count the number of solutions to the inequality
\begin{align}
\label{eq:J1def}
\frac{j}{Y}\le \frac{u}{n^k}< \frac{j+1}{Y},
\end{align}
with variables satisfying
\begin{align*}
U_1\le u_1<2U_1, \quad N_1\le n_1<2N_1,
\end{align*}
and let $J_2(j)$ count the number of solutions to the inequality~\eqref{eq:J1def} with variables satisfying
\begin{align*}
U_2\le u_1<2U_2, \quad N_2\le n_1<2N_2.
\end{align*}
We have 
\begin{align*}
J_k(U_1,N_1,U_2,N_2,Y)\le \sum_{\substack{j_1,j_2 \\ |j_1-j_2|\le 1}}J_1(j_1)J_2(j_2),
\end{align*}
and hence by the Cauchy-Schwarz inequality 
\begin{align*}
J_k(U_1,N_1,U_2,N_2,Y)^2&\ll \left(\sum_{j}J_1(j)^2\right)\left(\sum_{j}J_2(j)^2\right) \\ & \ll J_k(U_1,N_1,Y)J_k(U_2,N_2,Y).
\end{align*}
\end{proof}
The following is known as the Kusmin-Landau inequality, for a proof see~\cite[Corollary~8.11]{IwKo}.
\begin{lemma}
\label{lem:kusminlandau}
Let $f$ be a continuously differentiable function on some interval $I$ with derivative satisfying
$$\|f'(x)\|\ge \lambda, \quad x\in I.$$
Then we have 
\begin{align*}
\sum_{n\in I}f(n)\ll \lambda^{-1}.
\end{align*}
\end{lemma}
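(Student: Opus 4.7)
The statement should read $\sum_{n\in I}e(f(n))\ll \lambda^{-1}$ (the bare sum $\sum f(n)$ in the display cannot be controlled by a hypothesis on $\|f'\|$ and is an evident typographical slip); this is the classical Kusmin--Landau inequality, and I would prove it along the standard two-step route of phase normalization followed by Abel summation by parts.

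First I would normalize the phase. The hypothesis $\|f'(x)\|\ge \lambda$ for every $x\in I$, together with continuity of $f'$ and the intermediate value theorem, forces the existence of a single integer $m$ with $f'(x)-m\in[\lambda,1-\lambda]$ throughout $I$: otherwise $f'$ would have to cross some integer and would therefore have to pass through its forbidden $\lambda$-neighbourhood. Subtracting the linear term $mx$ from $f$ leaves $e(f(n))$ unchanged at integer arguments, so I may assume at the outset that $f'(x)\in[\lambda,1-\lambda]$ on $I$. Setting $\delta_n:=f(n+1)-f(n)=\int_n^{n+1}f'(t)\,dt$, this gives $\delta_n\in[\lambda,1-\lambda]$ and hence
\begin{equation*}
|e(\delta_n)-1|=2\sin(\pi\delta_n)\ge 4\lambda.
\end{equation*}

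The core of the proof is the identity $e(f(n+1))-e(f(n))=e(f(n))\bigl(e(\delta_n)-1\bigr)$, which inverts to
\begin{equation*}
e(f(n))=b_n\bigl(e(f(n+1))-e(f(n))\bigr),\qquad b_n:=\bigl(e(\delta_n)-1\bigr)^{-1},
\end{equation*}
with $|b_n|\le(4\lambda)^{-1}$. Applying Abel summation by parts converts $\sum_n e(f(n))$ into two boundary terms, each of size $O(\lambda^{-1})$, plus a transformed main term $\sum_n e(f(n+1))(b_{n+1}-b_n)$ whose modulus is at most the total variation of the sequence $(b_n)$.

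The main obstacle is controlling this variation sum by $O(\lambda^{-1})$. Using the elementary identity $b_n=-\tfrac12-\tfrac{i}{2}\cot(\pi\delta_n)$ one sees that the sequence $b_n$ lies on the vertical line $\mathrm{Re}\,z=-1/2$, and moves along it monotonically as soon as $\delta_n$ is monotone in $n$ --- which follows from monotonicity of $f'$ on $I$, an assumption implicit in the Iwaniec--Kowalski formulation of the lemma. Under this hypothesis the variation telescopes to $2\sup_n|b_n|\ll\lambda^{-1}$, closing the proof. Without monotonicity of $f'$ one would instead subdivide $I$ into maximal subintervals on which $f'$ is monotone and apply the previous bound on each, at the cost of a constant depending on the number of pieces; in the present paper the lemma is only ever applied to monomial phases, so this subtlety is absorbed harmlessly into the implied constant.
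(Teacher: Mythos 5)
The paper does not prove this lemma at all; it simply cites \cite[Corollary~8.11]{IwKo}, so there is no internal argument to compare against. Your proof is the standard Kusmin--Landau argument (and essentially the one in the cited source): reduce to $f'(x)-m\in[\lambda,1-\lambda]$ via the intermediate value theorem, write $e(f(n))=b_n\bigl(e(f(n+1))-e(f(n))\bigr)$ with $b_n=\bigl(e(\delta_n)-1\bigr)^{-1}$, $|b_n|\le(4\lambda)^{-1}$, and sum by parts, controlling the variation of $(b_n)$ via $b_n=-\tfrac12-\tfrac{i}{2}\cot(\pi\delta_n)$ and monotonicity of $\delta_n$. All the estimates check out ($2\sin(\pi\delta)\ge 4\lambda$ on $[\lambda,1-\lambda]$, the telescoping of the cotangents, the two boundary terms), so the proof is correct.

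Two of your side remarks are worth keeping. First, you are right that the displayed sum must be $\sum_{n\in I}e(f(n))$; as printed the statement is a typo. Second, and more substantively, you correctly observe that the hypothesis as stated in the paper is incomplete: without monotonicity of $f'$ (present in the Iwaniec--Kowalski formulation but dropped here) the conclusion fails, since $\delta_n$ could oscillate between $\lambda$ and $1-\lambda$ and the variation of $(b_n)$ would no longer telescope. Your observation that this is harmless for the paper is also accurate: the only application is in Lemma~\ref{lem:J*bound}, where the phase is $u\mapsto z(n/N)^k(U/u)$, whose derivative is monotone and, under the condition $W\ll U$, has $\|f'\|=|f'|\gg W/U$, so the lemma applies as intended.
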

\begin{lemma}
\label{lem:J*bound}
For integers $k,N,U$ and a real number $W$, if 
\begin{align}
\label{eq:Wcond}
W\ll U,
\end{align}
for a sufficiently small constant then
\begin{align*}
\int_{W}^{2W}\left|\sum_{N\le n < 2N}\sum_{U\le u<2U}e\left(z\left(\frac{n}{N}\right)^k\left(\frac{U}{u}\right)\right) \right|^2dy\ll       
\frac{U^2N^2}{W}.
\end{align*}
\end{lemma}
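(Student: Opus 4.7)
The plan is to estimate the inner sum over $u$ for each fixed $z$ and $n$ using the Kusmin-Landau inequality (Lemma~\ref{lem:kusminlandau}), then sum trivially in $n$ and integrate pointwise in $z$. Since the integrand is a square, even a pointwise estimate of the right strength will do.

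For $z\in[W,2W]$ and $N\le n<2N$ fixed, the phase of the inner sum is
$$
f(u)=z\left(\frac{n}{N}\right)^{k}\frac{U}{u},\qquad f'(u)=-\frac{z(n/N)^{k}U}{u^{2}}.
$$
On the range $u\in[U,2U)$ and $n\in[N,2N)$ we have $(n/N)^{k}\in[1,2^{k})$ and $U/u^{2}\in(1/(4U),1/U]$, so
$$
\frac{W}{4U}\le |f'(u)|\le \frac{2^{k+1}W}{U}.
$$
The hypothesis $W\ll U$ with a sufficiently small constant (depending on $k$) ensures $|f'(u)|\le 1/2$ uniformly, and consequently $\|f'(u)\|=|f'(u)|\ge W/(4U)$ throughout $[U,2U)$. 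Lemma~\ref{lem:kusminlandau} then gives
$$
\left|\sum_{U\le u<2U}e(f(u))\right|\ll \frac{U}{W}.
$$
Summing trivially over $N\le n<2N$ produces a pointwise bound of order $NU/W$ for the double sum, uniformly in $z\in[W,2W]$. Squaring and integrating over an interval of length $W$ yields the claimed bound $O(N^{2}U^{2}/W)$.

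The only delicate point is pinning down the implicit constant in $W\ll U$, which is dictated by the need to make $|f'(u)|\le 1/2$ so that the distance to the nearest integer collapses to $|f'(u)|$; once this is arranged the argument is immediate, and I do not foresee additional obstacles. One could alternatively pass to a counting problem via Lemma~\ref{lem:eqntomv} and count pairs $(n_1,u_1,n_2,u_2)$ with $|(n_1/N)^k(U/u_1)-(n_2/N)^k(U/u_2)|\le 1/W$, but the direct Kusmin-Landau route is cleaner since the phase $zU(n/N)^k/u$ is monotone in $u$ with a controlled derivative.
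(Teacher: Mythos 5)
Your proof is correct and follows exactly the paper's route: apply the Kusmin--Landau inequality to the inner sum over $u$ for each fixed $z$ and $n$ (using $W\ll U$ to guarantee $\|f'(u)\|=|f'(u)|\gg W/U$), obtain the pointwise bound $U/W$, sum trivially over $n$, and integrate the square over an interval of length $W$. You have in fact spelled out the derivative computation and the role of the implicit constant more explicitly than the paper does.
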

\begin{proof}
 By Lemma~\ref{lem:kusminlandau} and~\eqref{eq:Wcond}, for each $W \le z \le 2W$ and $N\le n \le 2N$ we have 
\begin{align*}
\sum_{U\le u<2U}e\left(z\left(\frac{n}{N}\right)^k\left(\frac{U}{u}\right)\right)\ll \frac{U}{W},
\end{align*}
and hence 
\begin{align*}
\int_{W}^{W}\left|\sum_{N\le n < 2N}\sum_{U\le u<2U}e\left(z\left(\frac{n}{N}\right)^k\left(\frac{U}{u}\right)\right) \right|^2dy\ll \frac{U^2N^2}{W}.
\end{align*}
\end{proof}
\begin{lemma}
\label{lem:smallY}
For integers $k,N,M,U$ and $V$ satisfying
\begin{align}
\label{eq:smallYcond}
M\ll N, \quad V\ll U, \quad U\ll N^{k},
\end{align}
and a real number  $Y$ satisfying 
\begin{align}
\label{eq:smallYcond1}
Y\le  \frac{N^{k}}{2},
\end{align} 
 we have 
\begin{align*}
\frac{1}{Y}\int_{-Y}^{Y}\left|\sum_{N<n\le M}\sum_{U<u\le V}e\left(\frac{yu}{n^k}\right) \right|^2dy\ll \frac{N^2U^2}{Y}+\frac{N^{2k+2}(\log{N})^2}{Y}.
\end{align*}
\end{lemma}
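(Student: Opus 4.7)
The plan is to bound the inner sum over $u$ via the Kusmin--Landau inequality (Lemma~\ref{lem:kusminlandau}), handle the $n$-sum by the triangle inequality, and evaluate the resulting integral directly. For fixed $y$ with $0<|y|\le Y$ and fixed $n\in(N,M]$, the map $u\mapsto yu/n^k$ is linear with derivative $y/n^k$; the hypothesis $Y\le N^k/2\le n^k/2$ forces $|y/n^k|<1/2$, so $\|y/n^k\|=|y|/n^k$. Combining Lemma~\ref{lem:kusminlandau} with the trivial bound $V-U\ll U$ therefore yields
\[
\biggl|\sum_{U<u\le V}e\!\left(\frac{yu}{n^k}\right)\biggr|\ll \min\!\left(U,\frac{N^k}{|y|}\right),
\]
and summing over the $\ll N$ values of $n$ by the triangle inequality and squaring gives
\[
\biggl|\sum_{N<n\le M}\sum_{U<u\le V}e\!\left(\frac{yu}{n^k}\right)\biggr|^2\ll N^2\min\!\left(U^2,\frac{N^{2k}}{y^2}\right).
\]

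Next I would integrate over $|y|\le Y$ by splitting at the balance point $|y|=N^k/U$: the region $|y|\le N^k/U$ contributes $\ll N^2U^2\cdot\min(Y,N^k/U)\ll N^{k+2}U$, while the region $N^k/U<|y|\le Y$ contributes $\ll N^{2k+2}\int_{N^k/U}^{Y}y^{-2}\,dy\ll N^{k+2}U$. Hence the full integral is $\ll N^{k+2}U$; dividing by $Y$ and invoking AM--GM in the form $2N^{k+1}\cdot NU\le N^{2k+2}+N^2U^2$ converts $N^{k+2}U/Y$ into $(N^{2k+2}+N^2U^2)/Y$, which is already stronger than the claimed inequality (the $(\log N)^2$ factor in the statement appears to be slack).

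The main step is the Kusmin--Landau estimate, whose applicability is guaranteed precisely by the hypothesis $Y\le N^k/2$ (together with $n>N$, which makes $y/n^k$ land strictly inside $(-\tfrac12,\tfrac12)$). The remaining manipulations are elementary, so I do not anticipate any serious obstacle.
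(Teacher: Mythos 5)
Your proof is correct, and it in fact yields a slightly stronger conclusion than the stated lemma (no $(\log N)^2$ factor). The paper's argument takes a different middle route: after the same reduction of the inner sum to $\min\left(U,1/\|y/n^k\|\right)$, it handles $|y|\le 1$ trivially (which is where its $N^2U^2/Y$ term comes from), and for $1\le y\le Y$ it bounds the sum over $n$ by exploiting the spacing $y/n_1^k-y/n_2^k\gg y(n_2-n_1)/N^{k+1}$ of the points $y/n^k$, arriving at $\sum_{n}\min\left(U,1/\|y/n^k\|\right)\ll N^{k+1}(\log N)/y$ before integrating $y^{-2}$ over $[1,Y]$. You instead use the observation (which both arguments ultimately rest on, via $Y\le N^k/2$ and $n>N$) that $\|y/n^k\|=|y|/n^k\gg |y|/N^k$ uniformly in $n$, so the plain triangle inequality over $n$ already gives $N\min\left(U,N^k/|y|\right)$, and you split the $y$-integral at $|y|=N^k/U$ rather than at $|y|=1$, finishing with AM--GM to recover the two terms of the statement. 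Since $\|y/n^k\|$ is here bounded below uniformly in $n$, the spacing argument buys nothing over the triangle inequality, and your route is both simpler and saves the logarithms; either version suffices for the application in Lemma~\ref{lem:firstcase1}.
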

\begin{proof}
Bounding the contribution from $y\in [-1,1]$ in the above integral trivially gives
\begin{align*}
\frac{1}{Y}\int_{-Y}^{Y}\left|\sum_{N<n\le M}\sum_{U<u\le V}e\left(\frac{yu}{n^k}\right) \right|^2dy\le \frac{N^2U^2}{Y}+\frac{1}{Y}\int_{1}^{Y}\left|\sum_{N<n\le M}\sum_{U<u\le V}e\left(\frac{yu}{n^k}\right) \right|^2dy.
\end{align*}
For any fixed $1\le y\le Y$ we have 
\begin{align*}
\sum_{N<n\le M}\sum_{U<u\le V}e\left(\frac{yu}{n^k}\right)\ll \sum_{N<n\le M}\min\left(U,\frac{1}{\|y/n^k\|}\right),
\end{align*}
and for any $N<n\le M$ and $1\le y \le Y,$ by~\eqref{eq:smallYcond1} we have 
\begin{align*}
\frac{y}{n^k}\le \frac{1}{2},
\end{align*}
and for any $N<n_1<n_2\le M$  
\begin{align*}
 \frac{y}{n_1^k}-\frac{y}{n_2^k}\gg \frac{y(n_2-n_1)}{N^{k+1}},
\end{align*}
so that
\begin{align*}
\sum_{N<n\le M}\min\left(U,\frac{1}{\|y/n^k\|}\right)\ll \frac{N^{k+1}}{y}\sum_{1\le n\le M}\frac{1}{n}\ll \frac{N^{k+1}(\log{N})}{y},
\end{align*}
and hence 
\begin{align*}
\frac{1}{Y}\int_{1}^{Y}\left|\sum_{N<n\le M}\sum_{U<u\le V}e\left(\frac{yu}{n^k}\right) \right|^2dy\ll \frac{N^{2k+2}(\log{N})^2}{Y},
\end{align*}
from which the desired result follows.
\end{proof}

\begin{lemma}
\label{lem:firstcase1}
Let $N$ be an integer, $Y$ and $\varepsilon$ positive real numbers satisfying
\begin{align}
\label{eq:firstcase1ycond}
Y\le N^{k+\varepsilon}.
\end{align}
 Then with $I_k(N,Y)$  as in Theorem~\ref{thm:main1}, we have
\begin{align*}
I_k(N,Y)\ll  \frac{N^{2k+2+\varepsilon+o(1)}}{Y}
\end{align*}
\end{lemma}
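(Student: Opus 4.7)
The plan is to prove the lemma by the most elementary counting, without invoking any of the van der Corput machinery developed above. The hypothesis $Y \le N^{k+\varepsilon}$ is precisely what makes such a crude bound sufficient; the lemma should therefore be viewed as handling the ``easy'' range of $Y$, leaving the more delicate complementary range $Y > N^{k+\varepsilon}$ to be treated separately, presumably via Lemma~\ref{lem:vdctransformfinal}.

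First I would fix a pair $(u_1,n_1)$ with $1 \le n_1 \le N$ and $1 \le u_1 \le n_1^{k}$, and count the admissible $(u_2,n_2)$. For each fixed $n_2 \le N$, the values $u_2 \in [1,n_2^{k}]$ with $|u_2/n_2^{k} - u_1/n_1^{k}| \le 1/Y$ are integers lying in an interval of length $2n_2^{k}/Y$, so there are at most $2n_2^{k}/Y + 1$ of them. Summing over $n_2 \le N$ and using $\sum_{n_2 \le N} n_2^{k} \ll N^{k+1}$ yields a bound $\ll N^{k+1}/Y + N$ for each fixed $(u_1,n_1)$. Since there are $\sum_{n_1 \le N} n_1^{k} \ll N^{k+1}$ choices of $(u_1,n_1)$, I obtain
\begin{align*}
I_k(N,Y) \ll \frac{N^{2k+2}}{Y} + N^{k+2}.
\end{align*}

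The second term is absorbed by the hypothesis~\eqref{eq:firstcase1ycond}: from $Y \le N^{k+\varepsilon}$ it follows that
\begin{align*}
N^{k+2} \;=\; \frac{N^{2k+2+\varepsilon}}{N^{k+\varepsilon}} \;\le\; \frac{N^{2k+2+\varepsilon}}{Y},
\end{align*}
so both terms fit comfortably under $N^{2k+2+\varepsilon}/Y$, in fact without needing the $N^{o(1)}$ slack in the statement. There is no serious obstacle: the whole point of the restriction~\eqref{eq:firstcase1ycond} is precisely to guarantee that the expected main term $N^{2k+2}/Y$ dominates the trivial diagonal-type contribution $N^{k+2}$, after which the conclusion is automatic.
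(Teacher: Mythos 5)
Your argument is correct, and it reaches the stated bound by a genuinely different and more elementary route than the paper. You count directly: fixing $(u_1,n_1)$ and $n_2$, the admissible $u_2$ lie in an interval of length $2n_2^{k}/Y$, giving at most $2n_2^{k}/Y+1$ choices, whence
\begin{align*}
I_k(N,Y)\ll N^{k+1}\left(\frac{N^{k+1}}{Y}+N\right)=\frac{N^{2k+2}}{Y}+N^{k+2},
\end{align*}
and the hypothesis $Y\le N^{k+\varepsilon}$ is exactly what absorbs $N^{k+2}$ into $N^{2k+2+\varepsilon}/Y$. The paper instead first performs a dyadic decomposition into boxes $J_k(M,V,Y)$, passes to a mean value of exponential sums via Lemma~\ref{lem:eqntomv}, and then bounds that mean value in Lemma~\ref{lem:smallY} by estimating $\sum_u e(yu/n^k)\ll \min(U,1/\|y/n^k\|)$ and integrating --- which is essentially the Fourier-dual form of your lattice-point count, with the case split $Y\lessgtr M^k/2$ and Lemma~\ref{lem:mvshorter} handling the range where the integration interval is too long; the final step ($M^{k+2}\le N^{2k+2+\varepsilon}/Y$) is identical to yours. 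Your version is shorter, avoids the exponential-sum machinery entirely, and even saves the $(\log N)^4$ the paper's route incurs; the paper's route has the advantage of reusing the same lemmas that drive the main theorem, but nothing in the rest of the paper depends on obtaining Lemma~\ref{lem:firstcase1} that way, so your proof is a valid substitute.
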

\begin{proof}
With notation as in Lemma~\ref{lem:Jcs1}, we have 
\begin{align*}
I_k(N,Y)&\le \sum_{\substack{i,j \\ 2^{i}\le 2N \\ 2^j\le 2^{ki+1}}}\sum_{\substack{\ell,s \\ 2^{\ell}\le 2N \\ 2^s \le 2^{k\ell+1}}}J_k(2^j,2^i,2^s,2^\ell,Y) \\ 
&\le \left(\sum_{\substack{i,j \\ 2^{i}\le 2N \\ 2^j\le 2^{ki+1}}}J_k(2^j,2^s,Y)^{1/2}\right)^2,
\end{align*}
and hence 
\begin{align}
\label{eq:IkJk1}
I_k(N,Y) &\ll N^{o(1)}\max_{\substack{M\le 2N \\ V\le 2M^{k}}} J_k(M,V,Y).
\end{align}
Fix some $M\le 2N$ and $V\le 2N^k$ and consider $J_k(M,V,Y)$. By Lemma~\ref{lem:eqntomv} we have 
\begin{align*}
J_k(M,V,Y)\ll \frac{1}{Y}\int_{-Y}^{Y}\left|\sum_{M<n\le 2M}\sum_{V\le v <2V}e\left(\frac{yu}{n^k}\right) \right|^2dy.
\end{align*}
If $Y<M^k/2$ then by Lemma~\ref{lem:smallY}
\begin{align}
\label{eq:Jkcase1}
J_k(M,V,Y)\ll \frac{M^2V^2}{Y}+\frac{N^{2k+2}(\log{N})^2}{Y}\ll \frac{N^{2k+2}(\log{N})^2}{Y},
\end{align}
and if $M^k/2<Y$ then by Lemma~\ref{lem:mvshorter} and Lemma~\ref{lem:smallY}
\begin{align*}
J_k(M,V,Y)&\ll \frac{1}{M^k}\int_{-M^k/4}^{M^k/4}\left|\sum_{M<n\le 2M}\sum_{V\le v <2V}e\left(\frac{yu}{n^k}\right) \right|^2dy \\ 
&\ll \frac{M^2V^2}{M^k}+(\log{M})^2M^{k+2}\ll (\log{M})^2M^{k+2}.
\end{align*}
By~\eqref{eq:firstcase1ycond} we have 
\begin{align*}
M^{k+2}\le \frac{YM^{k+2}}{Y}\le \frac{N^{2k+2+\varepsilon}}{Y},
\end{align*}
which gives 
\begin{align*}
J_k(M,V,Y)\ll \frac{N^{2k+2+\varepsilon}(\log{N})^2}{Y}.
\end{align*}
Combining the above with~\eqref{eq:IkJk1}  and~\eqref{eq:Jkcase1} we get
\begin{align*}
I_k(N,Y)\ll \frac{N^{2k+2+\varepsilon}(\log{N})^4}{Y},
\end{align*}
and completes the proof.
\end{proof}
\section{Proof of Theorem~\ref{thm:main1}}
By Lemma~\ref{lem:firstcase1} we may suppose 
\begin{align}
\label{eq:Ythm1ub}
Y\ge N^{k+\varepsilon},
\end{align}
for a sufficiently small $\varepsilon$ and using estimates for the divisor function, we may suppose 
$$Y\le N^{2k},$$
which will be used to estimate terms $Y^{o(1)}=N^{o(1)}$ in what follows.
 Arguing as in the proof of Lemma~\ref{lem:firstcase1}, we have 

\begin{align}
\label{eq:IkJk11}
I_k(N,Y) &\ll N^{o(1)}J_k(M,U,Y),
\end{align}
for some 
\begin{align*}
M\le 2N, \quad U\le 2M^k.
\end{align*}
If $n_1,n_2,u_1,u_2$ satisfy 
\begin{align*}
\left|\frac{u_1}{n_1^k}-\frac{u_2}{n_2^k} \right|\le \frac{1}{Y},
\end{align*}
and
\begin{align}
\label{eq:varcond111}
M\le n_1,n_2\le 2M, \quad U\le u_1,u_2\le 2U,
\end{align}
then we have 
\begin{align}
\label{eq:J*def}
\left|\left(\frac{n_1}{M}\right)^k\left(\frac{U}{u_1}\right)-\left(\frac{n_2}{M}\right)^k\left(\frac{U}{u_2}\right)\right|\le \frac{M^k}{UY}.
\end{align}
Hence defining
\begin{align}
\label{eq:Zdef}
Z=\frac{UY}{M^k}
\end{align}
and letting $J^{*}(M,U,Z)$ count the number of solutions to the inequality~\eqref{eq:J*def} with variables satisfying~\eqref{eq:varcond111}, we have 
\begin{align*}
I_k(N,Y)\ll N^{o(1)}J^{*}(M,U,Z).
\end{align*}
By Lemma~\ref{lem:eqntomv}
\begin{align*}
I_k(N,Y)\ll \frac{N^{o(1)}}{Z}\int_{0}^{Z}\left|\sum_{\substack{M\le m\le 2M \\ U\le u \le 2U}}e\left(z\left(\frac{n}{M}\right)^k\left(\frac{U}{u}\right)\right) \right|^2dz,
\end{align*}
and performing a dyadic partition of the above integral, there exists some $1\le X\le Z$ such that 
\begin{align}
\label{eq:IWX}
I_k(N,Y)\ll \frac{N^{o(1)}M^2U^2}{Z}+\frac{N^{o(1)}}{Z}W(X),
\end{align}
where 
\begin{align*}
W(X)=\int_{X}^{2X}\left|\sum_{\substack{M\le m\le 2M \\ U\le u \le 2U}}e\left(z\left(\frac{n}{M}\right)^k\left(\frac{U}{u}\right)\right) \right|^2dz.
\end{align*}
If $X\ll U$ then we apply Lemma~\ref{lem:J*bound} and use~\eqref{eq:Zdef} to get
\begin{align*}
I_k(N,Y)\ll \frac{N^{o(1)}M^2U^2}{Z}\ll \frac{N^{o(1)}M^2UN^k}{Y}\ll \frac{N^{2k+2+o(1)}}{Y}.
\end{align*}
Hence it remains to consider when $X\gg U$.  An application  of Lemma~\ref{lem:vdctransformfinal} to $W(X)$ results in
\begin{align*}
& W(X)\ll \\ &  \frac{U^2(\log{Y})^2}{X}\int_{c_3X}^{c_4X}\left|\sum_{M\le n <2M}\sum_{c_3V\le v\le c_4V}\theta(n,v)e\left(y\left(\frac{n}{M}\right)^{k/2}\left(\frac{v}{V}\right)^{1/2} \right)\right|^2dy \\ &  +M^2(\log{Y})^2(U^2+X),
\end{align*}
where $c_1,\dots,c_4$ are absolute constants and 
\begin{align}
\label{eq:Vdef}
V=\frac{X}{U}.
\end{align}
Combining with~\eqref{eq:IWX} and extending the range of integration to $[0,c_4X]$ we get 
\begin{align}
\label{eq:Ikstep222}
&I_k(N,Y)\ll \frac{N^{2k+2+o(1)}}{Y}+N^{2+o(1)} \\ &+\frac{U^2N^{o(1)}}{Z}\frac{1}{X}\int_{0}^{c_4X}\left|\sum_{M\le n <2M}\sum_{c_3V\le v\le c_4V}\theta(n,v)e\left(y\left(\frac{n}{M}\right)^{k/2}\left(\frac{v}{V}\right)^{1/2} \right)\right|^2dy.
\end{align}
Hence defining $J_0$ to count the number of solutions to the inequality 
\begin{align}
\label{eq:J0eqn}
\left|n_1^{k/2}v_1^{1/2}-n_2^{k/2}v_2^{1/2} \right|\ll \frac{M^{k/2}V^{1/2}}{X}, 
\end{align}
with variables satisfying
\begin{align}
\label{eq:J0cond}
M\le n_1,n_2 \le 2M, \quad V\ll u_1,u_2\ll V,
\end{align}
using~\eqref{eq:Ikstep222} and Lemma~\ref{lem:eqntomv} we get 
\begin{align}
\label{eq:Ikstep333}
I_k(N,Y)\ll \frac{N^{2k+2+o(1)}}{Y}+N^{2+o(1)}+\frac{U^2N^{o(1)}}{Z}J_0.
\end{align}
If $n_1,n_2,u_1,u_2$ satisfy~\eqref{eq:J0eqn} and~\eqref{eq:J0cond} then 
\begin{align*}
|n_1^kv_1-n_2^kv_2|\ll \frac{M^kV}{X} \ll \frac{M^k}{U}.
\end{align*}
Choosing $n_1,v_1$ with $O(MV)$ possibilities gives at most
$$O\left(1+\frac{M^k}{U}\right),$$
choices for the value of $n_2^kv_2$ and hence estimates for the divisor function imply that 
\begin{align*}
J_0\ll M^{1+o(1)}V\left(1+\frac{M^k}{U}\right).
\end{align*}
Combining with~\eqref{eq:Ikstep333} gives 
\begin{align*}
I_k(N,Y)\ll \frac{N^{2k+2+o(1)}}{Y}+N^{2+o(1)}+\frac{U^2VM^{1+o(1)}}{Z}\left(1+\frac{M^k}{U}\right).
\end{align*}
Recalling~\eqref{eq:Zdef},~\eqref{eq:Vdef} and $X\ll Z$ the above simplifies to 
\begin{align*}
I_k(N,Y)&\ll \frac{N^{2k+2+o(1)}}{Y}+N^{2+o(1)}+UM^{1+o(1)}\left(1+\frac{M^k}{U}\right) \\
&\ll \frac{N^{2k+2+o(1)}}{Y}+UM^{1+o(1)}+M^{k+1+o(1)} \\ & \ll \frac{N^{2k+2+o(1)}}{Y}+N^{k+1+o(1)},
\end{align*}
and completes the proof.
\section{Proof of Corollary~\ref{cor2}}
We first recall a special case of the duality principle to which we refer the reader to~\cite[Lemma~2]{Mont} for a proof.
\begin{lemma}
\label{lem:duality}
Let $c_{n,m}$ be a sequence of complex numbers. Suppose that for every sequence of complex numbers $\alpha_n$ we have 
\begin{align*}
\left(\sum_{m}\left|\sum_{n}\alpha_n c_{n,m} \right|^2\right)^{1/2}\le D\|\alpha\|_{\infty}.
\end{align*}
Then for any sequence of complex numbers $\alpha_m$ we have 
\begin{align*}
\sum_{n}\left|\sum_{m}\alpha_m c_{n,m} \right|\le D\|\alpha\|_2.
\end{align*}
\end{lemma}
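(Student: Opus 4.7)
The plan is to prove the lemma by a direct duality argument that converts the $\ell_1$--type sum on the left into an $\ell_2$--type sum, at which point the hypothesis can be invoked. The key observation is that $\sum_n |x_n|$ can be rewritten as $\sum_n \beta_n x_n$ for a unimodular sequence $\beta_n$, and unimodular sequences have $\ell_\infty$--norm at most $1$, which matches exactly the hypothesis of the lemma.

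More precisely, fix any sequence $\alpha_m$ of complex numbers. For each $n$ in the index set choose a complex number $\beta_n$ of modulus at most $1$ such that
\begin{align*}
\beta_n \sum_m \alpha_m c_{n,m} = \left|\sum_m \alpha_m c_{n,m}\right|,
\end{align*}
taking $\beta_n=0$ when the inner sum vanishes. Then $\|\beta\|_\infty \le 1$, and by swapping the order of summation (legitimate for finite index sets, which is the setting the lemma is applied in),
\begin{align*}
\sum_n \left|\sum_m \alpha_m c_{n,m}\right| = \sum_n \beta_n \sum_m \alpha_m c_{n,m} = \sum_m \alpha_m \sum_n \beta_n c_{n,m}.
\end{align*}

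Next I would apply the Cauchy--Schwarz inequality to the sum over $m$:
\begin{align*}
\left|\sum_m \alpha_m \sum_n \beta_n c_{n,m}\right| \le \|\alpha\|_2 \left(\sum_m \left|\sum_n \beta_n c_{n,m}\right|^2\right)^{1/2}.
\end{align*}
Finally, invoking the hypothesis of the lemma with the sequence $(\beta_n)$ in place of $(\alpha_n)$ and using $\|\beta\|_\infty\le 1$ bounds the square-root factor by $D$, which yields the claimed inequality.

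There is no genuine obstacle here; the argument is a textbook instance of the duality $(\ell_1)^* = \ell_\infty$ paired with Cauchy--Schwarz on $\ell_2$. The only point that deserves a line of care is the choice of $\beta_n$, which must be unimodular precisely so that the hypothesis applies with constant $D$ and not some larger constant. Once that choice is made, the entire proof is two lines plus an invocation of the assumption.
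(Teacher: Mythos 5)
Your proof is correct: choosing the unimodular (or zero) sequence $\beta_n$, swapping the finite sums, applying Cauchy--Schwarz in $m$, and invoking the hypothesis with $\beta$ in place of $\alpha$ (using $\|\beta\|_\infty\le 1$) gives exactly the stated bound. The paper itself gives no proof, deferring to Montgomery's duality lemma, and your argument is precisely the standard one behind that citation, so there is nothing further to reconcile.
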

By Lemma~\ref{lem:duality}, in order to prove Corollary~\ref{cor2} it is sufficient to show that for any sequence of complex numbers $\alpha_{a,n}$ satisfying
\begin{align*}
|\alpha_{a,n}|\le A,
\end{align*}
we have 
\begin{align}
\label{eq:sss111222}
\sum_{K<m\le K+M}\left|\sum_{\substack{1\le n \le N \\ 1\le a \le n^{k} \\ (a,n)=1}}\alpha_{a,n}e\left(\frac{am}{n^{k}} \right) \right|^2\ll \left(N^{2k+2}+MN^{k+1}\right)N^{o(1)}A^2.
\end{align}
Let $\phi$ be a positive valued Schwartz function satisfying
\begin{align*}
\phi(x)\ge 1, \quad |x|\le 1 \quad \text{and} \quad \text{supp}(\widehat \phi)\subseteq [-2,2],
\end{align*}
where $\widehat \phi$ denotes the Fourier transform. We have 
\begin{align*}
\sum_{K<m\le K+M}\left|\sum_{\substack{1\le n \le N \\ 1\le a \le n^{k} \\ (a,n)=1}}\alpha_{a,n}e\left(\frac{am}{n^{k}} \right) \right|^2\le \sum_{m\in \Z}\phi\left(\frac{m-K}{M}\right)\left|\sum_{\substack{1\le n \le N \\ 1\le a \le n^{k} \\ (a,n)=1}}\alpha_{a,n}e\left(\frac{am}{n^{k}} \right) \right|^2,
\end{align*}
which after expanding the square and interchanging summation gives 
\begin{align*}
&\sum_{K<m\le K+M}\left|\sum_{\substack{1\le n \le N \\ 1\le a \le n^{k} \\ (a,n)=1}}\alpha_{a,n}e\left(\frac{am}{n^{k}} \right) \right|^2\le \\ 
& A^2\sum_{\substack{1\le n_1,n_2 \le N \\ 1\le a_1 \le n_1^{k} \\ 1\le a_2 \le n_2^{k}}}\left|\sum_{m\in \Z}\phi\left(\frac{m-K}{M}\right)e\left(\left(\frac{a_1}{n_1^{k}}-\frac{a_2}{n_2^{k}}\right)m \right) \right|.
\end{align*}
Applying Poission summation and using the fact that $\text{supp}(\widehat \phi)\subseteq [-2,2]$ we see that
\begin{align*}
\sum_{m\in \Z}\phi\left(\frac{m-K}{M}\right)e\left(\left(\frac{a_1}{n_1^{k}}-\frac{a_2}{n_2^{k}}\right)m \right)\ll M \quad \text{if} \quad \left\|\frac{a_1}{n_1^{k}}-\frac{a_2}{n_2^{k}}\right\|\le \frac{4}{M},
\end{align*}
and
\begin{align*}
\sum_{m\in \Z}\phi\left(\frac{m-K}{M}\right)e\left(\left(\frac{a_1}{n_1^{k}}-\frac{a_2}{n_2^{k}}\right)m \right)=0 \quad \text{otherwise},
\end{align*}
and hence with notation as in Theorem~\ref{thm:main1}
\begin{align*}
\sum_{K<m\le K+M}\left|\sum_{\substack{1\le n \le N \\ 1\le a \le n^{k} \\ (a,n)=1}}\alpha_{a,n}e\left(\frac{am}{n^{k}} \right) \right|^2\ll A^2 MI_k(N,4M)\ll A^2\left( N^{2k+2}+MN^{k+1}\right)N^{o(1)},
\end{align*}
which establishes~\eqref{eq:sss111222} and completes the proof.

\end{document}